\documentclass[article,ij4uq]{ij4uq}              

\usepackage[hang]{footmisc}
\usepackage[ruled]{algorithm2e}
\usepackage{algpseudocode}

\usepackage{epstopdf}

\setlength{\footnotemargin}{0in}
\frenchspacing
\fancypagestyle{plain}{%
	\fancyhf{}
	\fancyhead[R]{\small {\it International Journal for Uncertainty Quantification}, XXX}
	\fancyfoot[R]{\small\bf\thepage }
	\fancyfoot[L]{\fottitle www.begellhouse.com}
}


\renewcommand{\myyear}{2020}
\renewcommand{\today}{}


\newcommand{\bbE}{\mathbb{E}}
\newcommand{\bbP}{\mathbb{P}}
\newcommand{\bbQ}{\mathbb{Q}}
\newcommand{\bbZ}{\mathbb{Z}}
\newcommand{\bbN}{\mathbb{N}}

\newcommand{\cO}{\mathcal{O}}

\newcommand{\cI}{{\cal I}}

\def\bbR{\mathbb{R}}

\newtheorem{ass}{Assumption} 

\theoremstyle{plain}
\newtheorem{lem}{Lemma}

\theoremstyle{remark}
\newtheorem{exam}{Example}

\def\blu#1{{{#1}}}

\begin{document}

\volume{Volume X, Issue XXX, \myyear\today}
\title{Multi-Index Sequential Monte Carlo Methods for partially observed Stochastic Partial Differential Equations}
\titlehead{MISMC$^2$ for SPDE}
\authorhead{Jasra, Law, \& Xu}

\author[1]{Ajay Jasra}
\email{ajay.jasra@kaust.edu.sa}
\corrauthor[2]{Kody J. H. Law}
\corremail{kody.law@manchester.ac.uk}
\corrurl{https://sites.google.com/view/kodylaw/home}
\author[1]{Yaxian Xu}
\email{staxy@nus.edu.sg}
\address[1]{Computer, Electrical and Mathematical Science and Engineering Division, King Abdullah University of Science and Technology, Thuwal, 23955-6900, KSA.}
\address[2]{Computer Science and Mathematics Division, Oak Ridge National Laboratory, Oak Ridge, TN, USA, 37831}
\address[3]{Department of Statistics \& Applied Probability National University of Singapore Singapore, Singapore}

\dataO{\mydate\today}

\dataF{\mydate\today}

\abstract{In this paper we consider sequential joint state and static parameter estimation given 
discrete time observations associated to a partially observed stochastic
partial differential equation. 
It is assumed that one can only estimate the hidden state 
using a discretization of the model.
In this context, it is known that the multi-index Monte Carlo (MIMC)
method of \cite{mimc} can be used to improve over direct Monte Carlo
from the most precise discretizaton. However, in the context of interest, it cannot be directly applied, but rather must be  used within another 
method such as sequential Monte Carlo (SMC).
We show how one can
use the MIMC method by renormalizing the 
standard identity and approximating the resulting identity using the SMC$^{2}$ method of \cite{chopin2}, {which is 
an exact method that can be used in this context}.
We prove 
that our approach can 
reduce the cost to obtain 
a given mean square error, 
relative to just using SMC$^{2}$ on the most precise
discretization. We demonstrate this with some numerical examples.}
\keywords{Stochastic Partial Differential Equations; Multi-Index Monte Carlo, Sequential Monte Carlo}

\maketitle

\section{Introduction}

We consider joint state and static parameter estimation for discrete time observations, associated to a partially observed stochastic
partial differential equation (SPDE).
Such models can be considered a form of hidden Markov model (HMM),
and these have a significant number of practical applications; see e.g.~\cite{Cappe_2005} for instance. {See Figure \ref{fig:hmm} for a schematic.
The objective is to estimate the states
$(X_0,X_1,\dots, X_n) \in \mathsf{X}^{n+1}$ and parameters $\theta\in \Theta$
given the data $(y_0,\dots,y_n)$, i.e.~we want to find
$$
\bbP(X_0\in A_0,X_1\in A_1,\dots, X_n\in A_n, \theta\in A_T |y_0,\dots,y_n)
$$
recursively in time $n$.}

\begin{figure}
	\centering
	\includegraphics[width=.7\columnwidth]{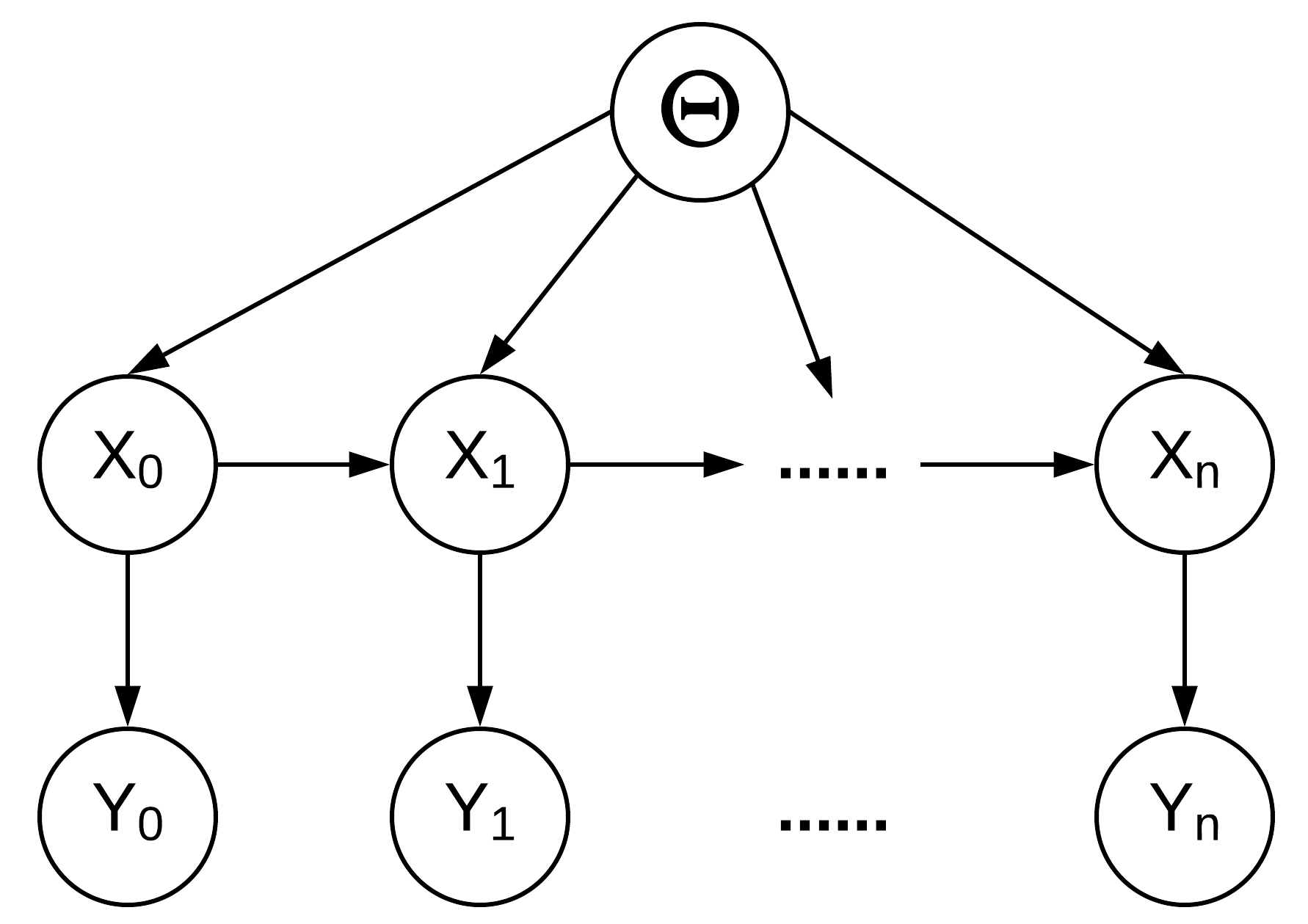}
	\caption{A graphical model of the HMM studied in this paper.}
	\label{fig:hmm}
\end{figure}

{In this article we focus upon the scenario where one will have to discretize the time and space element of the SPDE. 
}
In this scenario, one is faced with the problem of 
joint state and static parameter estimation for a HMM with
a high-dimensional state; 
a problem which is notoriously challenging. {The main issue is that for any fixed static parameter, one can seldom calculate the joint density (the smoother), given the data, of the hidden states over the observation times.
Joint inference on the parameter is even more challenging,
due to the dependence of all the hidden states on the parameter
(see Figure \ref{fig:hmm}).
The state of the art method for consistently solving this problem for a fixed time $n$
is the particle Markov chain Monte Carlo (PMCMC) method \cite{andrieu}, which
involves using sequential Monte Carlo (SMC) within Markov chain Monte Carlo (MCMC).
SMC methods can approximate the sequence of joint distributions on $X_{0:n}$ for $\theta$ fixed (smoothers).
They consist of sampling $N$ samples (also called particles) in parallel, sequentially in time.
SMC methods involve the recursion of a mutation step,
an importance sampling step, and a resampling step.
They provide
consistent (as $N\rightarrow\infty$) approximations of expectations w.r.t.~the smoother.
In  many contexts, SMC is referred to as a particle filter,  hence the name PMCMC.
We remark, however, one also seeks to perform statistical inference sequentially in time,
which adds yet another complication.
There exists a methodology which can extend PMCMC to a dynamic context,
called SMC$^2$ \cite{chopin2}.
{This 
method is an SMC algorithm which 
sequentially samples from
a sequence of auxiliary target distributions
(the targets of a certain PMCMC algorithm),}
which admit the distribution of interest as a marginal.
The main reason why one 
would consider such a level of complexity,
is that conventional SMC methodology
which is designed for joint state and parameter inference associated to HMMs suffers from the so-called path degeneracy issue (see e.g.~\cite{kantas1}), which
renders it useless for long time intervals.}

In the problem of interest, we are dealing with an expectation w.r.t.~a probability measure {which is defined on a high-dimensional continuous state-space as a result of the space and time discretization}. It is assumed that the computational cost associated to performing any simulation-based numerical method will increase as the precision of the discretization is enhanced.
{In such a context, the multilevel Monte Carlo (MLMC) method is
very effective for reducing the
cost to achieve a given level of accuracy \cite{giles,giles1,hein}.
Following the success of the MLMC method,
the work \cite{mimc} revisited the MLMC identity
through the lense of sparse grids for multi-dimensional discretizations.
This more general method, which reduces to MLMC for one dimensional (discretizations)  problems,
is called multi-index Monte Carlo (MIMC).
This method can be much more efficient than MLMC 
for higher dimensional problems with suitable regularity,
for example providing canonical complexity $\cO(\varepsilon^{-2})$
to achieve mean square error (MSE) $\cO(\varepsilon^2)$.}
In this approach one rewrites the expectation of interest 
as a sum of difference of differences (DOD) w.r.t.~independent refinements
of the discretization levels of different dimensions,
for example different spatial dimensions and time.
If the number of dimensions which are discretized
is $d$, then this 
DOD involves 
expectations w.r.t.~at most $2^d$ different discretization levels.
Then, under appropriate assumptions and given an efficient \emph{coupling} of these $2^d$ distributions, 
the cost to achieve a prespecified MSE is reduced with respect to considering the Monte Carlo method using a single discretization level \cite{mimc}.
Indeed, under appropriate assumptions, {relying strongly on the mixed regularity of the solution of the SPDE, and with an appropriate index set, the MIMC approach can achieve a substantial improvement in  cost with respect to 
MLMC \cite{mimc}.} {The coupling is essential to ensuring that the \emph{variance}
of the estimates of the DODs decay appropriately w.r.t.~the discretization indices. This allows one to use fewer samples in
approximating higher index residuals, hence balancing the cost in an optimal way.}

{
The sampling of a 
``good coupling'' of the $2^d$ distributions is the main key to cost reduction
in MIMC, as in MLMC.
The problem of approximating expectations w.r.t.~the joint distribution of interest for a single discretization level/index is already challenging, and coupling $2^d$ such distributions is naturally much more complex.}
{A method for using MCMC
within the MIMC framework was developed in \cite{jklz},
based upon an idea in \cite{jklz1} developed originally for PMCMC (see also \cite{franks}).
The method involves constructing an approximate coupling of the $2^d$
targets and using this to approximate a re-normalized multi-index (MI) identity.
See \cite{ml_rev} for a pedagogical introduction to this general approximate coupling strategy.
In the present context, the aim is to perform inference sequentially as data arrives,
where the unobserved process is an SPDE. 
This is done using the SMC$^{2}$ method described above.
\blu{It involves first
extending the MLMC PMCMC method of \cite{jklz1} to the MIMC context,
in order to accommodate an SPDE model. 
Second,
this new MIMC PMCMC is deployed within an SM$\textrm{C}^2$ algorithm,
which yields the MISMC$^2$ algorithm. 
The latter generalization of course yields MLSMC$^2$ as a byproduct, 
since ML is a particular case of MI.} 
Under appropriate assumptions, we prove
that our approach can 
reduce the cost 
required to obtain a given MSE, 
relative to just using SMC$^{2}$ on the most precise
discretization, {or even using the MLMC version}.
This is demonstrated with a 
numerical example.

\blu{Before proceeding, and to avoid confusion, we briefly digress on the distinction from existing
applications of MLMC to SMC. For a more comprehensive discussion, 
the reader is referred to the recent review \cite{ml_rev}. 
In particular, in \cite{beskos} the authors employ an SMC algorithm over {\em levels} 
(i.e. level there is analogous to time in the present), 
and subsequently constructed MLMC estimators using importance sampling estimators of 
the level increments. 
In contrast, in \cite{jasra2017multilevel} the authors couple pairs of SMC algorithms
in the particle filtering context using a coupled resampling mechanism.}

This article is structured as follows. {In Section \ref{sec:cartoon}, we provide a high-level introduction into the underlying idea of the article.}
In Section \ref{sec:prob_form} we describe the problem and how it may be solved, if numerical approximation were not required.
In Section \ref{sec:method} we show how our approach can be numerically approximated. In Section \ref{sec:theory} the theoretical result is given, with the proofs in the appendix.
Numerical results are presented in Section \ref{sec:numerics} .

{
\section{High-Level Discussion of the Approach}
\label{sec:cartoon}

The method presented in this article is quite complicated. To assist non-experts, we provide
a high-level description of the basic idea with minimal notations, 
in which one dimension is discretized. 
First we explain the approximate coupling strategy used to enable MIMC estimation.
Then we explain the PMCMC and SMC$^2$ methods.


\subsection{Approximate coupling}

Consider a probability density on state-space $\mathsf{X}$
$$
p(x) := \frac{J(x)F(x)}{Z}
$$
where $J,F$ are two positive, real-valued functions, $\int_{\mathsf{X}}F(x)dx=1$, 
and $Z=\int_{\mathsf{X}}J(x)F(x)dx$ (assumed to be finite). It is of interest to compute
expectation of real-valued functions $\varphi:\mathsf{X}\rightarrow\mathbb{R}$  that are $p-$integrable:
$$
\mathbb{E}_p[\varphi(X)] = \int_{\mathsf{X}}\varphi(x)p(x)dx.
$$
Suppose that one only 
has access to a sequence $J_l(x)F_l(x)$, $l\in\{0,1,\dots\}$ which are positive, real-valued functions such that
$$
\lim_{l\rightarrow\infty}\mathbb{E}_{p_l}[\varphi(X)] = \mathbb{E}_p[\varphi(X)]
$$
where $\mathbb{E}_{p_l}[\varphi(X)] = \int_{\mathsf{X}}\varphi(x)p_l(x)dx$, $p_l(x)=[J_l(x)F_l(x)]/Z_l$, $Z_l=\int_{\mathsf{X}}J_l(x)F_l(x)dx$.

Now, the MLMC identity
$$
\mathbb{E}_{p_L}[\varphi(X)] = \sum_{l=1}^L\{\mathbb{E}_{p_l}[\varphi(X)]-\mathbb{E}_{p_{l-1}}[\varphi(X)]\} + \mathbb{E}_{p_0}[\varphi(X)]
$$
can be very useful to reduce the computational effort in the Monte Carlo approximation of $\mathbb{E}_{p_L}[\varphi(X)]$, to achieve a given error (versus considering only $\mathbb{E}_{p_L}[\varphi(X)]$). 
The key to this method is the ability 
to construct a coupling $\check{p}_{l,l-1}$ of $(p_l,p_{l-1})$ for each $l\in\{1,2\dots\}$, i.e. a
probability density function on $\mathsf{X}\times\mathsf{X}$ such that
for every $(x,{x'})\in\mathsf{X}\times\mathsf{X}$
\begin{eqnarray*}
p_l(x) & = & \int_{\mathsf{X}} \check{p}_{l,l-1}(x,{x'})d{x'} \\
p_{l-1}({x'}) & = & \int_{\mathsf{X}} \check{p}_{l,l-1}(x,{x'})dx.
\end{eqnarray*}
Then, one has
\begin{equation}\label{eq:increment}
\mathbb{E}_{p_l}[\varphi(X)]-\mathbb{E}_{p_{l-1}}[\varphi(X)] = \int_{\mathsf{X}\times\mathsf{X}}\varphi(x) \check{p}_{l,l-1}(x,{x'})d(x,{x'}) - \int_{\mathsf{X}\times\mathsf{X}}\varphi({x'}) \check{p}_{l,l-1}(x,{x'})d(x,{x'}).
\end{equation}
If the coupling is sufficiently good, so that for instance 
$$
\int_{\mathsf{X}\times\mathsf{X}}(\varphi(x)-\varphi({x'}))^2 \check{p}_{l,l-1}(x,{x'})d(x,{x'})  \leq h(l) \, ,
$$
where $\lim_{l\rightarrow\infty}h(l)=0$, $h$ is a positive, real-valued, monotonically decreasing 
function on $\{0,1,\dots\}$, then the aforementioned benefits are possible; see e.g.~\cite{giles,giles1,hein}.
The Monte Carlo method would rely on exact sampling from the distribution associated to the coupling 
$\check{p}_{l,l-1}$.

Let $l\geq 1$ be fixed.
In many practical problems of interest, such as the one considered in this article, 
deriving a suitable coupling $\check{p}_{l,l-1}$ which is amenable to known simulation methodology can
be very challenging. 
The basic idea used in this paper
and as adopted in \cite{jklz1} is as follows. 
Suppose one can find a coupling
$\check{F}_{l,l-1}$ of $(F_l,F_{l-1})$
such that 
\begin{eqnarray}\label{eq:checkf}
F_l(x) & = & \int_{\mathsf{X}} \check{F}_{l,l-1}(x,{x'})d{x'} \\
F_{l-1}({x'}) & = & \int_{\mathsf{X}} \check{F}_{l,l-1}(x,{x'})dx \, ,
\nonumber
\end{eqnarray}
and
$$
\int_{\mathsf{X}\times\mathsf{X}}(\varphi(x)-\varphi({x'}))^2 \check{F}_{l,l-1}(x,{x'})d(x,{x'})  \leq h(l) \, .
$$
Now set
$$
\tilde{p}_{l,l-1}(x,{x'}) = \frac{\max\{J_l(x),J_{l-1}({x'})\}\check{F}_{l,l-1}(x,{x'})}{\tilde{Z}_{l,l-1}} \, ,
$$
with $\tilde{Z}_{l,l-1}=\int_{\mathsf{X}\times\mathsf{X}}\max\{J_l(x),J_{l-1}({x'})\}\check{F}_{l,l-1}(x,{x'})d(x,{x'})$ (assumed to be finite). 
Note that
\begin{eqnarray}
\mathbb{E}_{p_l}[\varphi(X)] & = & \int_{\mathsf{X}}\varphi(x)p_l(x)dx \label{eq:marginal} \\ \nonumber
& = & \frac{1}{Z_l}\int_{\mathsf{X}\times\mathsf{X}}\varphi(x)J_l(x)\check{F}_{l,l-1}(x,{x'}) d(x,{x'}) \\ \nonumber
& = & \frac{\tilde{Z}_{l,l-1}}{Z_l} \int_{\mathsf{X}\times\mathsf{X}}\varphi(x)\frac{J_l(x)}{\max\{J_l(x),J_{l-1}({x'})\}}\tilde{p}_{l,l-1}(x,{x'}) d(x,{x'}) \\ \nonumber
& = &  \mathbb{E}_{\tilde{p}_{l,l-1}}\Big[\varphi(X)\frac{J_l(X)}{\max\{J_l(X),J_{l-1}({X'})\}}\Big]\Big/\mathbb{E}_{\tilde{p}_{l,l-1}}\Big[\frac{J_l(X)}{\max\{J_l(X),J_{l-1}({X'})\}}\Big]
\end{eqnarray}
where 
\blu{the last line uses that 
$\mathbb{E}_{\tilde{p}_{l,l-1}}\Big[\frac{J_l(X)}{\max\{J_l(X),J_{l-1}({X'})\}}\Big] = {Z_l}/{\tilde{Z}_{l,l-1}}$.} 
Thus
\begin{eqnarray*}
\mathbb{E}_{p_l}[\varphi(X)]-\mathbb{E}_{p_{l-1}}[\varphi(X)] & = &
\mathbb{E}_{\tilde{p}_{l,l-1}}\Big[\varphi(X)\frac{J_l(X)}{\max\{J_l(X),J_{l-1}({X'})\}}\Big]\Big/\mathbb{E}_{\tilde{p}_{l,l-1}}\Big[\frac{J_l(X)}{\max\{J_l(X),J_{l-1}({X'})\}}\Big] - \\ & &
\mathbb{E}_{\tilde{p}_{l,l-1}}\Big[\varphi({X'})\frac{J_{l-1}({X'})}{\max\{J_l(X),J_{l-1}({X'})\}}\Big]\Big/\mathbb{E}_{\tilde{p}_{l,l-1}}\Big[\frac{J_{l-1}({X'})}{\max\{J_l(X),J_{l-1}({X'})\}}\Big] \, .
\end{eqnarray*}
The main interest of this identity is the fact that one 
may be able to construct a coupling like $\check{F}_{l,l-1}$ 
and very efficient sampling methods for $\tilde{p}_{l,l-1}$, whereas this may not be
the case for $\check{p}_{l,l-1}$. It is then possible (e.g.~\cite{jklz1}) that the benefits of the MLMC method 
can be achieved, even though one does not know how to sample from a good coupling 
$\check{p}_{l,l-1}$.
}

\subsection{Monte Carlo methods}

%

\blu{First a simplified description of PMCMC is given.
Suppose one aims to estimate expectations with respect to the joint state and 
parameter smoothing distribution associated to an HMM, as described above, 
i.e. the aim is to approximate
$$
p(x_{1:n}, \theta | y_{1:n}) \propto p(x_{1:n} | y_{1:n}, \theta) p(y_{1:n}|\theta) p(\theta) \, .
$$
The first factor on the right-hand side is the smoothing distribution for a fixed parameters,
which can be well-approximated consistently with a particle filter.
The second term is the marginal likelihood, for which an unbiased and non-negative estimator 
is available via the particle filter.
It seems reasonable then to consider approximating the target distribution using a particle filter.
The PMCMC method leverages this intuition by 
using a non-negative unbiased estimate of the un-normalized joint distribution above
derived from the particle filter within an MCMC method. 
Let $\bbP_n(v_{1:n} | \theta)$ denote the distribution of {\em all auxiliary variables} 
$v_{1:n} = (x_{1:n}^{1:N}, a_{1:n}^{1:N})$ of an $N-$particle filter targeting the smoothing distribution
$p(x_{1:n} | y_{1:n}, \theta)$ 
(the variables $a_{1:n}^{1:N} \in [1,\dots, N]^{n\times N}$ denote the $N$ 
resampled indices at times $i=1,\dots, n$).
Let $\hat p(y_{1:n}|\theta)$ denote the particle filter estimate of the marginal likelihood.
PMCMC is an MCMC method with the target distribution 
$\bbP_n(v_{1:n} | \theta) \hat p(y_{1:n}|\theta) p(\theta)$.}

\blu{Now, we describe a simplified version of the SMC$^2$ algorithm. 
Let $\bbQ_n(v_{n+1}| v_{1:n},\theta)$
denote the 1-step transition kernel of the particle filter, so that 
$\bbP_n(v_{1:n+1} | \theta) = \bbQ_n(v_{n+1}| v_{1:n},\theta) \bbP_n(v_{1:n} | \theta)$.
Suppose $K_n$ is an MCMC 
kernel targeting
$\Pi_n(v_{1:n}, \theta) \propto Q_n(v_{1:n}, \theta)$,
where  one can construct estimates of $p(x_{1:n}, \theta | y_{1:n})$ 
from an appropriate marginal of $\Pi_n$.
Define 
\begin{equation}\label{eq:smc2kern}
M_n(v_{1:n},dv'_{1:n+1}) = K_{n}(v_{1:n}, dv'_{1:n}) \otimes \bbQ_n(v_{1:n}',v_{n+1}') dv_{n+1}'\, .
\end{equation}
It is now possible to run an SMC sampler to sequentially target $\Pi_n$.
For $i=1,\dots, N$, one draws $\hat v_1^i := v^i \sim \Pi_0$,
and then iterates for $n \geq 1$
\begin{itemize}
\item Simulate $v_{n+1}^i \sim M_n(\hat v_{1:n}^i, \cdot)$ ;
\item Resample $\hat v_{1:n+1}^i = v_{1:n+1}^j$,
with probability proportional to
$$
\frac{Q_{n+1}(v_{1:n+1}^j)}{Q_{n}(v_{1:n}^{j}) \bbQ_n(v^j_{1:n},v^j_{n+1})} \, .
$$
\end{itemize}
Note that one does not need to be able evaluate either 
$Q_{n}$ or $\bbQ_n$, 
but only simulate from them 
and evaluate the ratio above.
A very similar method, details of which will be given in Sec. \ref{sec:method},
 is referred to as SMC$^2$:
the first (inner) SMC appears in the PMCMC kernel $K_n$, for each $n$,
and the second (outer) SMC appears in the SMC sampler on the extended state-space. 
}

\section{Rigorous Problem Formulation}\label{sec:prob_form}

\subsection{Model}

Let $(\mathsf{Y},\mathcal{Y})$ and $(\mathsf{X},\mathcal{X})$ be measurable spaces. {We consider a pair of stochastic processes indexed by a time parameter.}
We are given a sequence of 
observations {$y_0,y_1,\dots$ which are realizations of a discrete-time process $\{Y_n\}_{n\in\mathbb{N}_0}$, $Y_n\in\mathsf{Y}$, where the time between observations
is one unit.}
These observations are associated {with} a continuous-time (Markov) stochastic process $\{X_t\}_{t\geq 0}$, with $X_t\in\mathsf{X}$.
The process would 
typically arise from the finite-time evolution of 
an SPDE,
although we do not make this constraint at this time.

{We now present the stochastic model which describes the probabilistic structure of the processes  $\{Y_n\}_{n\in\mathbb{N}_0}$ and $\{X_t\}_{t\geq 0}$.
In our model, $\theta\in\Theta\subseteq\mathbb{R}^k$ is a static parameter associated to the model. We will define the afore-mentioned structure, conditonal upon $\theta$
and then define a prior probability distribution on this static parameter.
Let $X_{0:n}=(X_0,\dots,X_n)$ correspond to 
a discrete-time skeleton of $\{X_t\}_{t\geq 0}$ on the grid $0:n$.
We are interested in the posterior probability distribution of $(X_{0:n},\theta)$
conditional on observed data $y_0,\dots,y_n$, 
sequentially over discrete unit times ($n$).}
It is supposed that for any $n\geq 0$, $A\in\mathcal{Y}$
$$
{\mathbb{P}(Y_n\in A | y_{0:n-1}, \{x_t\}_{t\in[0,\dots,n]},\theta) =
\int_A g_{\theta}(x_n,y)dy}
$$
where $dy$ is a $\sigma-$finite measure on $(\mathsf{Y},\mathcal{Y})$ and
for each {
$(\theta,x)\in\Theta\times\mathsf{X}$,
$g_{\theta}(x,\cdot ):\mathsf{Y}\rightarrow\mathbb{R}_+$ is
a probability density on $\mathsf{Y}$.}
{For each $\theta\in\Theta$, $f_{\theta}:\mathsf{X}^2\rightarrow\mathbb{R}_+$, (resp.~$\mu_{\theta}:\mathsf{X}\rightarrow\mathbb{R}_+$)} are the transition density over unit time (resp.~initial density) of $\{X_t\}_{t\geq 0}$ {(resp.~$X_0$)} w.r.t.~a dominating $\sigma-$finite measure on $(\mathsf{X},\mathcal{X})$. Note that for every $(\theta,x)\in\Theta\times\mathsf{X}$,
{$f_{\theta}(x,\cdot): \mathsf{X} \rightarrow \bbR_+$
is a probability density, and for any $n\geq 1$
$$
\bbP(X_{n} \in A | \{x_{r}\}_{0\leq r \leq n-1}, \theta)
= \int_A f_{\theta}(x_{n-1}, x) dx \, .
$$ }
Let $\nu$ be a probability density w.r.t.~Lebesque measure (written $d\theta$) on $(\Theta,\mathcal{B}(\Theta))$ with $\mathcal{B}(\Theta)$ the Borel sets.

{For $n\geq 0$, the posterior probability density on $\mathsf{X}^{n+1}\times\Theta$ that is induced by this construction is given by
\begin{equation}
\label{eq:post_def}
\pi_{n}(x_{0:n},\theta) \propto \nu(\theta)\mu_{\theta}(x_0) g_{\theta}(x_0,y_0)\prod_{p=1}^n f_{\theta}(x_{p-1},x_p)g_{\theta}(x_p,y_p) \, .
\end{equation}
In other words for $A \in \vee^{n+1}\mathcal{X} \vee \mathcal{B}(\Theta)$
$$
\bbP((X_{0:n},\theta) \in A | y_{0:n}) = \int_A \pi_{n}(x_{0:n},\theta)
d(x_{0:n},\theta) \, .
$$}
{Henceforth, we will suppress the dependence on $y_{0:n}$
throughout the article}.
Let $\varphi:\mathsf{X}^{n+1}\times\Theta\rightarrow\mathbb{R}$ be integrable w.r.t.~$\pi_n$.  
{Our} objective is to compute, recursively in $n$
$$
\mathbb{E}_{\pi_{n}}[\varphi(X_{0:n},\theta)]:=\int_{\mathsf{X}^{n+1}\times\Theta} \varphi(x_{0:n},\theta)\pi_{n}(x_{0:n},\theta)d(x_{0:n},\theta)
$$
{where we use the notation
$\mathbb{E}_{\pi}$ to denote expectations w.r.t.~
a probability density/measure $\pi$.
The role of the function $\varphi$ is as a summary or quantity of interest, relating to the random variables $X_{0:n},\theta$. That is, our objective is to compute expectations, such as moments,
w.r.t.~the posterior, with density defined in \eqref{eq:post_def}.
We note that
one often must use 
Monte Carlo methods to approximate the sequence of expectations.

Before concluding this section we mention a canonical statistical model,
the conditionally Gaussian model.

{
\begin{exam}\label{ex:gauss}
Let $N(m,C)$ denote a (possibly infinite-dimensional)
Gaussian random variable with mean $m$ and covariance operator $C$, and
let $\phi(\cdot ; m, C)$ denote its density (with respect to some dominating measure,
which may be taken as Lebesgue in finite dimensions).
Assume $X_0 \sim N(m_0, \Sigma_0)$. For each $\theta \in \Theta$,
let $\Psi_\theta: \mathsf{X} \rightarrow \mathsf{X}$ and
$h_\theta: \mathsf{X} \rightarrow \mathsf{Y}$ be continuous
and let $\Sigma_\theta, \Gamma_\theta$ be symmetric positive definite
operators. An example of a model is, for $n\geq 0$
\begin{eqnarray}\label{eq:gaussian}
X_{n+1}|X_n & \sim N(\Psi_\theta(X_n), \Sigma_\theta) \, , \\
\nonumber
Y_{n}|X_n & \sim N(h_\theta(X_{n}), \Gamma_\theta) \, .
\end{eqnarray}
This model is ubiquitous in the data assimilation literature \cite{law2015data}.
Once $\nu$ is specified, the model \eqref{eq:post_def} is given by
$\mu_\theta(x_0)  = \phi(x_0 ; m_0, \Sigma_0) , ~
f_\theta(x,x') = \phi(x' ; \Psi_\theta(x), \Sigma_\theta) , ~ {\rm and}~
g_\theta(x,y)  = \phi(y ; h_\theta(x), \Gamma_\theta)$.

This model fits into the context of this paper when $\mathsf{X}$
is infinite dimensional, e.g.~a Hilbert space, and $\Psi_\theta$
is the solution of a PDE parametrized by $\theta$.
\end{exam}}}

\subsection{Discretized Model}\label{sec:disc_model}

{The exposition here closely follows that developed in \cite{jklz}.}
Here we explicitly assume
one must work
with a discretized version of the model, that is, there does not (currently) exist an unbiased and non-negative approximation of $\pi_{n}(x_{0:n},\theta)$.
{We remark that if the latter approximations are available, then the strategy to be outlined is not required.}

{Set $\alpha\in\mathbb{N}_0^d$, which will refer to a collection of indices which will denote
the level of discretization of our model {in each of $d$ dimensions}.
That is, as the components of $\alpha$ increase, so does the accuracy of the approximation. Precise examples are given in Section \ref{sec:numerics}.}
More explicitly, {for any fixed $\alpha\in\mathbb{N}_0^d$, let $(\mathsf{X}_{\alpha},\mathcal{X}_{\alpha})$ and $(\mathsf{Y}_{\alpha},\mathcal{Y}_{\alpha})$ be measurable spaces such that for every $n\geq 0$}
one  can obtain a biased approximation $X_{\alpha}\in\mathsf{X}_{\alpha}\subseteq\mathsf{X}$
 of $X_n$, and $Y_{\alpha}\in\mathsf{Y}_{\alpha}\subseteq\mathsf{Y}$ of $Y_n$. That is, one can define the probability density for $n\geq 0$, on $\mathsf{X}_{\alpha}^{n+1}\times\Theta$:{
\begin{equation}\label{eq:post_disc}
\pi_{n,\alpha}(x_{0:n},\theta) \propto \nu(\theta)\mu_{\theta,\alpha}(x_0) g_{\theta,\alpha}(x_0,y_0)\prod_{p=1}^n f_{\theta,\alpha}(x_{p-1},x_p)g_{\theta,\alpha}(x_p,y_p)
\end{equation}}
where $y_n\in\mathsf{Y}_{\alpha}$ for each $n\geq 0$.
Here for every {$(\alpha,\theta)\in\mathbb{N}_0^d\times\Theta$
\begin{itemize}
\item{For all $x\in\mathsf{X}_{\alpha}$,
$g_{\theta,\alpha}(x,\cdot)$ is a probability density on $\mathsf{Y}_\alpha$ ;}
\item{For all $x\in\mathsf{X}_{\alpha}$, $f_{\theta,\alpha}(x,\cdot)$ is a probability density on $\mathsf{X}_\alpha$; }
\item{$\mu_{\theta,\alpha}$ is a probability density on $\mathsf{X}_\alpha$.}
\end{itemize}
}

{
Consider $\varphi:\mathbb{N}_0^d\times\mathsf{X}^{n+1}\times\Theta\rightarrow\mathbb{R}$, where 
for any $(x_{0:n},\theta)\in \mathsf{X}^{n+1}\times\Theta$
$$
\lim_{\min_{1\leq i\leq d}\alpha_i \rightarrow+\infty}\varphi_{\alpha}(x_{0:n},\theta) = \varphi(x_{0:n},\theta).
$$
It is assumed that we have
\begin{eqnarray}
\mathbb{E}_{\pi_{n,\alpha}}[\varphi_{\alpha}(X_{0:n},\theta)] & \neq & \mathbb{E}_{\pi_n}[\varphi(X_{0:n},\theta)]. \nonumber \\
\lim_{\min_{1\leq i\leq d}\alpha_i \rightarrow+\infty}|\mathbb{E}_{\pi_{n,\alpha}}[\varphi_{\alpha}(X_{0:n},\theta)]-\mathbb{E}_{\pi_n}[\varphi(X_{0:n},\theta)]| & = & 0.\label{eq:assump1}
\end{eqnarray}}
{As remarked in the introduction}, the computational cost associated with $X_{\alpha},Y_{\alpha}$ ({sampling, or evaluating the densities $g_{\theta,\alpha},f_{\theta,\alpha}, \mu_{\theta,\alpha}$}) increases as any index 
of $\alpha$ increases.
Our objective is now to compute $\mathbb{E}_{\pi_{n,\alpha}}[\varphi_{\alpha}(X_{0:n},\theta)]$ recursively for each $n \geq 0$.

\subsection{Multi-Index Methods}\label{sec:mimc}

{The approach to be described, provides an approach to approximating $\mathbb{E}_{\pi_{n,\alpha}}[\varphi_{\alpha}(X_{0:n},\theta)]$ for any fixed $\alpha\in\mathbb{N}_0^d$.}
Define the difference operator $\Delta_i$, $i\in\{1,\dots,d\}$ as
$$
\Delta_i \mathbb{E}_{\pi_{n,\alpha}}[\varphi_{\alpha}(X_{0:n},\theta)]
:= \left\{\begin{array}{ll}
\mathbb{E}_{\pi_{n,\alpha}}[\varphi_{\alpha}(X_{0:n},\theta)]- \mathbb{E}_{\pi_{n,\alpha-e_i}}[\varphi_{\alpha-e_i}(X_{0:n},\theta)]  & \textrm{if}~\alpha_i>0 \\
\mathbb{E}_{\pi_{n,\alpha}}[\varphi_{\alpha}(X_{0:n},\theta)] & \textrm{otherwise}
\end{array}\right.
$$
where $e_i$ are the canonical vectors on $\mathbb{R}^d$. {Set
\begin{equation}\label{eq:multiincrement}
\Delta\mathbb{E}_{\pi_{n,\alpha}}[\varphi_{\alpha}(X_{0:n},\theta)] := (\Delta_1\circ\Delta_2\circ\cdots\circ \Delta_d)\Big(\mathbb{E}_{\pi_{n,\alpha}}[\varphi_{\alpha}(X_{0:n},\theta)]\Big)
\end{equation}
where $(\Delta_1\circ\Delta_2\circ\cdots\circ \Delta_d)$ denotes 
the composition of $\Delta_1,\dots,\Delta_d$.
Note that the order of applying the operators $\Delta_i$ in $\Delta$ does not matter.}

We now consider the identity
$$
\mathbb{E}_{\pi_{n}}[\varphi(X_{0:n},\theta)] = 
\sum_{\alpha\in \mathbb{N}_0^d} \Delta \mathbb{E}_{\pi_{n,\alpha}}[\varphi_{\alpha}(X_{0:n},\theta)].
$$
The work \cite{mimc} proposes to leverage this identity by constructing a
biased estimator of $\mathbb{E}_{\pi_{n}}[\varphi(X_{0:n},\theta)]$
for some finite multi-index set $\cI \subset \bbN_0^d$ as follows
{
\begin{equation}\label{eq:mimc}
\mathbb{E}_{\pi_{n} ,\mathcal{I}}[\varphi(X_{0:n},\theta)] :=
\sum_{\alpha\in\mathcal{I}}\Delta \mathbb{E}_{\pi_{n,\alpha}}[\varphi_{\alpha}(X_{0:n},\theta)] \, .
\end{equation}}
Such an approximation strategy is inspired by work in 
the sparse grids literature \cite{bungartz2004sparse}.
This estimator can (in principle) be approximated by a Monte Carlo method.
{This can be achieved by (if possible) sampling from a coupling of the (at most) $2^d$ different probability measures for a given $\alpha \in \mathcal{I}$ ({see \cite{mimc} for details}). It is counterintuitive at first
to construct a single estimator from a sum of so many other estimators,
but in fact if the coupling is strong, and under appropriate assumptions,
then this can substantially more efficient than a single term estimator,
in the sense that smaller MSE can be achieved for the same cost.
It is remarked that sampling from such a coupling is very challenging, especially in the context of the model considered in Section \ref{sec:disc_model}.}
The residual error is given by
$$
\mathbb{E}_{\pi_{n}}[\varphi(X_{0:n},\theta)]-\mathbb{E}_{\pi_{n},\mathcal{I}}[\varphi(X_{0:n},\theta)] =
\sum_{\alpha \notin \mathcal{I}} \Delta \mathbb{E}_{\pi_{n,\alpha}}[\varphi_{\alpha}(X_{0:n},\theta)].
$$
{It is shown in \cite{mimc, jklz1} that under appropriate assumptions
on the convergence of
estimates of the individual terms in \eqref{eq:multiincrement} one can gain
significant `speed-up' relative to single term or even single index MLMC methods.
The key point we emphasize here is that \emph{all results of \cite{mimc},
pertaining to all different index sets} $\mathcal{I}$,
rely solely on the convergence properties of estimates of the individual terms \eqref{eq:multiincrement}. Since there are significant other difficulties to deal with in the
present work, the finer properties of estimators with various different index sets
$\mathcal{I}$ will not be considered here,
although we note this is a crucial consideration in practice. Our objective here will
rather be to establish a general proof of principle method which provides convergence of
estimates of the individual terms in \eqref{eq:multiincrement} under suitable assumptions.
The results will be illustrated in Section \ref{sec:numerics}.}

\subsection{Renormalized Multi-Index Identity}

The following idea builds upon 
the approaches in \cite{jklz} and \cite{jklz1}.
Consider \eqref{eq:mimc} and {in particular
consider a single given summand \eqref{eq:multiincrement}} for $\alpha\in\mathcal{I}$, with $n$ fixed. 
{This summand is itself a linear combination of expectations with respect to}
$1<k_{\alpha}\leq 2^d$ probability measures. 
These $k_{\alpha}$ probability measures induce $k_{\alpha}'$ differences in \eqref{eq:mimc}; 
if $k_{\alpha}=2^d$, then $k_{\alpha}'=2^{d-1}$. {We remark that in the case that $k_{\alpha}=1$, one  does not need to consider how to construct a coupling for an MIMC method as the summand \eqref{eq:multiincrement} is only an expectation w.r.t.~a single probability measure.}

For simplicity of notation we will denote 
the $k_{\alpha}$ multi-indices by $\alpha(1),\dots,\alpha(k_{\alpha})$, where for $i\in\{1,\dots,k_{\alpha}\}$,
$\alpha(i)\in \mathcal{I}$. {Let $\alpha(i)_j$ denote the $j^{\textrm{th}}-$element of $\alpha(i)$.}
{The convention of the (non-unique) labelling is such that,
$\sum_{j=1}^d[\alpha(2i)-\alpha(2i-1)]_j=1$ for each $i\in\{1,\dots,k_{\alpha}'\}$,
$\alpha(k_{\alpha})=\alpha$ and $\alpha(1)=(\max\{\alpha_1-1,0\},\dots,\max\{\alpha_d-1,0\})$.} {
This labelling will provide a convenient way to write
$\Delta \mathbb{E}_{\pi_{n,\alpha}}$ $[\varphi_{\alpha}(X_{0:n},\theta)]$ below.
\begin{exam}
Suppose $d=3$ and $\alpha=(2,2,2)$, so $k_{\alpha}=8$, $k_{\alpha}'=4$. Then a labelling which satisfies the above constraints is
$$
\alpha(1) = (1,1,1), \alpha(2) = (2,1,1), \alpha(3) = (1,1,2), \alpha(4) = (2,1,2),
$$
$$
\alpha(5) = (1,2,1), \alpha(6) = (2,2,1), \alpha(7) = (1,2,2), \alpha(8) = (2,2,2).
$$
\end{exam}
}


{Recall the form of the target \eqref{eq:post_disc}
and the multi-increment summand to be estimated \eqref{eq:multiincrement}. }
We suppose that there exists a coupling of the discretized dynamics.
That is, there exists a Markov density $\check{f}_{\theta,\alpha(1:k_{\alpha})}(x(1:k_{\alpha}),x'(1:k_{\alpha}))$
such that for any $x(1:k_{\alpha})={(x(1),\dots,x(k_{\alpha}))}\in\bigotimes_{i=1}^{k_{\alpha}}\mathsf{X}_{\alpha(i)}$ and any $i\in\{1,\dots,k_{\alpha}\}$,
$A_i\in\mathcal{X}_{\alpha(i)}$, we have:
$$
\int_{\bigotimes_{j=1}^{i-1} \mathsf{X}_{\alpha(j)} \times A_i\times \bigotimes_{j=i+1}^{k_{\alpha}} \mathsf{X}_{\alpha(j)}}\check{f}_{\theta,\alpha(1:k_{\alpha})}(x(1:k_{\alpha}),x'(1:k_{\alpha}))dx'(1:k_{\alpha})
=
$$
\begin{equation}\label{eq:coupleddyn}
\int_{A_i}f_{\theta,\alpha(i)}(x(i),x'(i))dx'(i).
\end{equation}
{In other words, for a given $\alpha$,
coupled Markov dynamics are performed on the hierarchy of
$k_\alpha$ meshes in such a way that the marginal of the coupled dynamics
with respect to any of the $k_\alpha$ meshes $\alpha(i)$
corresponds to the exact Markov dynamics on mesh $\alpha(i)$.
Importantly, we do not assume that we can evaluate this Markov density.
We will only require being able to simulate from it.}

Similarly, we suppose that there exists a probability density
$\check{\mu}_{\theta}$ on $\bigotimes_{i=1}^{k_{\alpha}}\mathsf{X}_{\alpha(i)}$
such that for any $i\in\{1,\dots,k_{\alpha}\}$,
$A_i\in\mathcal{X}_{\alpha(i)}$, we have:
$$
\int_{\bigotimes_{j=1}^{i-1} \mathsf{X}_{\alpha(j)} \times A_i\times \bigotimes_{j=i+1}^{k_{\alpha}} \mathsf{X}_{\alpha(j)}}\check{\mu}_{\theta,\alpha(1:k_{\alpha})}(x(1:k_{\alpha}))dx(1:k_{\alpha})
=
$$
\begin{equation}\label{eq:coupledprior}
\int_{A_i}\mu_{\theta,\alpha(i)}(x(i))dx(i).
\end{equation}
We remark that one can find scenarios for which this is true.
We give an example below and another 
later in Section \ref{sec:numerics}.

{\begin{exam}
As a concrete example,
consider the setting of \eqref{eq:gaussian} in Example \ref{ex:gauss},
where $\Psi_\theta$ is the forward solution of a PDE over a unit time interval,
for example Navier-Stokes equation as in \cite{law2012evaluating}.
Suppose $\mu_{\theta} = N(0,S^a)$, where $a>0$ and
$S$ is the (possibly weak) solution operator
$S: f \mapsto u$
of an elliptic PDE on a cubic domain $\Omega = [0,1]^d$,
with convex boundary $\partial \Omega$:
\[\begin{split}
\left (-\sum_{i=1}^d \partial^2u / \partial x_i^2
\right )(x) & = f(x) \, , \qquad x \in \Omega \, , \\
u(x) & =  0 \, , \qquad x \in \partial \Omega \, .
\end{split}
\]
Suppose we have eigenfunctions $\{v_k\}_{k\in \bbZ_+^d}$ such that
$S v_k = \lambda_k v_k$ for $k\in \bbZ_+^d$,
and then we have a spectral (Karhunen-Lo{\'e}ve)
expansion of $X_{0} \sim \mu_{\theta}$ as
$X_0 = \sum_{k \in \bbZ_+^d} \lambda_k^{a/2} \xi_k v_k$,
where $\xi_k \sim N(0,1)$ i.i.d. (see e.g. \cite{stuart}).
Note the eigenfunctions can be decomposed as products of eigenfunctions
$\{\psi_l\}_{l \in \bbZ_+}$ of the $d=1$ problem
$(\partial^2\psi_{l}/\partial x_1^2)(x_1) = \lambda_{l} \psi_l(x_1)$, i.e.
$v_k(x_1,\dots, x_d) = \prod_{i=1}^d \psi_{k_i}(x_i)$.
Furthermore, suppose that
$X_{0,\alpha} = \sum_{k \in \mathcal{T}_\alpha} \lambda_k^{a/2} \xi_k v_k$,
where $\mathcal{T}_\alpha = \{k\in \bbZ_+^d ;
1\leq k_1 \leq 2^{\alpha_1}, \cdots, 1\leq k_d \leq 2^{\alpha_d} \}$.
One can simulate
$$
X_{0,\alpha(k_\alpha)} =
\sum_{k \in \mathcal{T}_{\alpha(k_\alpha)}} \lambda_k^{a/2} \xi_k v_k
\sim \mu_{\theta,\alpha(k_\alpha)} \, ,
$$
and then coarsen this realization appropriately such that
$X_{0,\alpha(i)} =
\sum_{k \in \mathcal{T}_{\alpha(i)}} \lambda_k^{a/2} \xi_k v_k$
is a realization of $\mu_{\theta,\alpha(i)}$, for each of the other targets $i<k_\alpha$.
Note that $\mathcal{T}_{\alpha(i)} \subset \mathcal{T}_{\alpha(k_\alpha)}$
so this just consists in using the same
$\{\xi_k\}_{k \in \mathcal{T}_{\alpha(k_\alpha)}}$
and setting $\xi_k=0$ for all
$k \in \mathcal{T}_{\alpha(k_\alpha)} \backslash \mathcal{T}_{\alpha(i)}$.
Hence we have a strong coupling which satisfies \eqref{eq:coupledprior}.

For simulating the forward kernel $\Psi_{\theta,\alpha}$,
assume that we have a Galerkin spectral solver \cite{hesthaven}
for any given spectral truncation
level $\alpha$ and a given time-step size
(if time-discretization is considered in the approximation,
its discretization level will be given by index $\alpha_{d+1}$).
Assume $\Sigma_{\theta} = S^b$, for $b>0$,
with $S$ defined as above, and discretizations defined similarly.
One can then simulate a single realization
$\chi_{\alpha(k_\alpha)} \sim N(0, \Sigma_{\theta,\alpha(k_\alpha)})$
just as for the initial condition and coarsen this to
$\chi_{\alpha(i)}$ for driving each of the other dynamics
with $i<k_\alpha$.
Hence we have a strong coupling which satisfies \eqref{eq:coupleddyn}.
\end{exam}}

\blu{Based on \eqref{eq:coupleddyn} and \eqref{eq:coupledprior}
we have an exact $k_\alpha$-fold coupling of
$$
\nu(\theta) \check{\mu}_{\theta,\alpha(1:k_{\alpha})}(x_0(1:k_{\alpha}))  
\prod_{p=1}^n \check{f}_{\theta,\alpha(1:k_{\alpha})}(x_{p-1}(1:k_{\alpha}),x_{p}(1:k_{\alpha})) \, ,
$$
which takes the role of the 2-fold coupling $\check F_{l,l-1}$ given in \eqref{eq:checkf}. 
Let $\check{g}:{\mathbb{N}_0^{d\times k_\alpha}}\times\bigotimes_{i=1}^{k_{\alpha}}\mathsf{X}_{\alpha(i)}\times\Theta\times \mathsf{Y} \rightarrow (0,\infty)$
be 
arbitrary for the moment. 
We consider the
{following probability density on the space $(\bigotimes_{i=1}^{k_{\alpha}}\mathsf{X}_{\alpha(i)})^{n+1}\times \Theta$}
\begin{eqnarray}\label{eq:jointmimc}
\xi_{n,\alpha(1:k_{\alpha})}(x_{0:n}(1:k_{\alpha}),\theta) & \propto & \nu(\theta) \check{\mu}_{\theta,\alpha(1:k_{\alpha})}(x_0(1:k_{\alpha}))  \prod_{p=1}^n \check{f}_{\theta,\alpha(1:k_{\alpha})}(x_{p-1}(1:k_{\alpha}),x_{p}(1:k_{\alpha})) \times 
\\ \nonumber & &
 \prod_{p=0}^n \check{g}_{\theta,\alpha(1:k_{\alpha})}(x_p(1:k_{\alpha}),y_p) \, .
\end{eqnarray}}
In the works \cite{jklz,jklz1} the following choice is made, for $p=0,\dots,n$,
\begin{equation}\label{eq:g}
\check{g}_{\theta,\alpha(1:k_{\alpha})}(x_p(1:k_{\alpha}),y_p) = \max\{g_{\theta,\alpha(1)}(x_p(1),y_p),\dots,g_{\theta,\alpha(k_{\alpha})}(x_p(k_{\alpha}),y_p)\} \, ,
\end{equation}
and this is the choice used in this article.
\blu{The motivation will become apparent shortly.
It is noted that other choices are possible 
(see e.g. \cite{franks} for further discussion and some alternatives).}

\blu{The expression {\eqref{eq:multiincrement} will be approximated 
using samples distributed according to \eqref{eq:jointmimc}.}} 
{We start by considering a single expectation in \eqref{eq:multiincrement}.
Note that \eqref{eq:coupleddyn} and \eqref{eq:coupledprior},
and the form of $\xi_{n, \alpha(1:k_\alpha)}$, immediately imply that }
for any $\alpha(i)$, $i\in\{1,\dots,k_{\alpha}\}$
\begin{equation}\label{eq:ref_2}
\mathbb{E}_{\pi_{n,\alpha(i)}}[\varphi_{\alpha(i)}(X_{0:n},\theta)] = \frac{\mathbb{E}_{\xi_{n,\alpha(1:k_{\alpha})}}\big[\varphi_{\alpha(i)}(X_{0:n}(i),\theta)
\prod_{p=0}^n \frac{g_{\theta,\alpha(i)}(X_p(i),y_p)}{\check{g}_{\theta,\alpha(1:k_{\alpha})}(X_p(1:k_{\alpha}),y_p)}
\big]}
{\mathbb{E}_{\xi_{n,\alpha(1:k_{\alpha})}}\big[\prod_{p=0}^n \frac{g_{\theta,\alpha(i)}(X_p(i),y_p)}{\check{g}_{\theta,\alpha(1:k_{\alpha})}(X_p(1:k_{\alpha}),y_p)}\big]} \, .
\end{equation}
\blu{This is recognizable as the analogue of \eqref{eq:marginal}.}
The following notation will be used for the weights, 
for any $\alpha(i)$, $i\in\{1,\dots,k_{\alpha}\}$
$$
H_{i,n,\alpha,\theta}(x_{0:n}(1:k_{\alpha})) := \prod_{p=0}^n \frac{g_{\theta,\alpha(i)}(x_p(i),y_p)}{\check{g}_{\theta,\alpha(1:k_{\alpha})}(x_p(1:k_{\alpha}),y_p)} \, .
$$
\blu{Note that the form of  \eqref{eq:g} ensures the weights are bounded (by 1), 
which is desirable for stability of the algorithm.}
{By combining \eqref{eq:ref_2} with $H_{i,n,\alpha,\theta}(x_{0:n}(1:k_{\alpha}))$ and recalling the definitions of $\Delta$ and $\Delta_i$ given in and above \eqref{eq:multiincrement},
one can then deduce that}
\begin{eqnarray}\nonumber
\Delta \mathbb{E}_{\pi_{n,\alpha}}[\varphi_{\alpha}(X_{0:n},\theta)]  & = &
\sum_{i=1}^{k_{\alpha}'}\tau_{i,\alpha}
\Bigg(\frac{
\mathbb{E}_{\xi_{n,\alpha(1:k_{\alpha})}}[\varphi_{\alpha(2i)}(X_{0:n}(2i),\theta)H_{2i,n,\alpha,\theta}(X_{0:n}(1:k_{\alpha}))]
}{\mathbb{E}_{\xi_{n,\alpha(1:k_{\alpha})}}[H_{2i,n,\alpha,\theta}(X_{0:n}(1:k_{\alpha}))]} \\ 
&  & - 
\frac{\mathbb{E}_{\xi_{n,\alpha(1:k_{\alpha})}}[\varphi_{\alpha(2i-1)}(X_{0:n}(2i-1),\theta)H_{2i-1,n,\alpha,\theta}(X_{0:n}(1:k_{\alpha}))]}
{\mathbb{E}_{\xi_{n,\alpha(1:k_{\alpha})}}[H_{2i-1,n,\alpha,\theta}(X_{0:n}(1:k_{\alpha}))}
\Bigg) \label{eq:basic_idea}
\end{eqnarray}
where $|\alpha|=\sum_{j=1}^d\alpha_j$ and {$\tau_{i,\alpha}=(-1)^{|\alpha(k_{\alpha})-\alpha(2i)|}$}.
\blu{This is the multi-index analogue of \eqref{eq:increment}, and has been used before in \cite{jklz}.
This approach is called approximate coupling; see \cite{ml_rev} for further discussion.}

Our strategy {for approximating $\mathbb{E}_{\pi_{n},\mathcal{I}}[\varphi(X_{0:n},\theta)]$}
is then the following. {Noting
\eqref{eq:mimc} and \eqref{eq:basic_idea}, we will approximate each summand in \eqref{eq:mimc}, by approximating the r.h.s.~of  \eqref{eq:basic_idea}. This will be achieved as follows.}
Independently for each $\alpha\in\mathcal{I}$ (with $k_{\alpha}>1$), and serially for each $n$,
 we will sample (approximately) from $\xi_{n,\alpha(1:k_{\alpha})}(x_{0:n}(1:k_{\alpha}),\theta)$, and compute a Monte Carlo estimate of $\Delta \mathbb{E}_{\pi_{n,\alpha}}[\varphi_{\alpha}(X_{0:n},\theta)] $. As noted above, in the case $k_{\alpha}=1$, one can simply sample from $\pi_{n,\alpha}$ as no coupling is required.

\section{Simulation Strategy}\label{sec:method}

{The purpose of this Section is to describe a method to approximate $\mathbb{E}_{\pi_{n},\mathcal{I}}[\varphi(X_{0:n},\theta)]$. This will be achieved
by using the SMC$^{2}$ method to approximate expectations w.r.t.~$\xi_{n,\alpha(1:k_{\alpha})}$
for each $\alpha\in\mathcal{I}$ (with $k_{\alpha}>1$), recursively in $n$.
If $k_{\alpha}=1$, then one can simply use the standard SMC$^{2}$
method 
considering $\pi_{n,\alpha}$.
The SMC$^{2}$ approach uses the particle MCMC method \cite{andrieu}, which in turn relies upon particle filters. Therefore, to develop our approach, we first provide a review of particle filters, followed by particle MCMC in the present context.}

\subsection{Particle Filter}

In this section, we focus upon the approximation of the
{density of the state conditional on fixed parameter $\theta$, given by 
 $\xi_{n,\alpha(1:k_{\alpha}),\theta}(x_{0:n}(1:k_{\alpha})) \propto
\xi_{n,\alpha(1:k_{\alpha})}(x_{0:n}(1:k_{\alpha}),\theta)$.}
It is natural to adopt an SMC approach, in which we sequentially perform importance sampling
and then resampling.
This procedure is the standard particle filter which is given in Algorithm \ref{algo:pf}.
\blu{The number of particles $N$ will be fixed once and for all, but it is noted that 
this is an important consideration for the efficiency of the proposed method.}

\begin{algorithm}[!b]
\caption{The Particle Filter}
\label{algo:pf}
\begin{itemize}
\item \textbf{Initialize:} Set $p=0$, for $i\in\{1,\dots,N\}$ sample $x_0^i(1:k_{\alpha})$ from $\check{\mu}_{\theta,\alpha(1:k_{\alpha})}$ and evaluate the weight
$$
w_{p,\alpha,\theta}^i = \Big(\check{g}_{\theta,\alpha(1:k_{\alpha})}(x_0^i(1:k_{\alpha}),y_0)\Big)\Big(\sum_{j=1}^N \check{g}_{\theta,\alpha(1:k_{\alpha})}(x_0^j(1:k_{\alpha}),y_0) \Big)^{-1}
$$

\item \textbf{Iterate:} Set $p=p+1$,
\begin{itemize}
\item Sample 
$(a_{p-1}^1,\dots,a_{p-1}^N)\in\{1,\dots,N\}^N$, where, independently for each $i\in\{1,\dots,N\}$, $\mathbb{P}(a_{p-1}^i=j)=w_{p-1,\alpha,\theta}^j$.
\item Sample $x_p^i(1:k_{\alpha})|x_{p-1}^{a_{p-1}^i}(1:k_{\alpha})$ from $\check{f}_{\theta,\alpha(1:k_{\alpha})}(x_{p-1}^{a_{p-1}^i}(1:k_{\alpha}),\cdot)$, for $i\in\{1,\dots,N\}$,
and evaluate the weight
$$
w_{p,\alpha,\theta}^i = \Big(\check{g}_{\theta,\alpha(1:k_{\alpha})}(x_p^i(1:k_{\alpha}),y_p)\Big)\Big(\sum_{j=1}^N \check{g}_{\theta,\alpha(1:k_{\alpha})}(x_p^j(1:k_{\alpha}),y_p) \Big)^{-1}.
$$
\end{itemize}
\end{itemize}
\end{algorithm}

The joint {density} of all the variables sampled in Algorithm \ref{algo:pf}, up to time $n$, is written
\begin{equation}\label{eq:pfjoint}
\psi_{\alpha,\theta}(x_{0:n}^{1:N}(1:k_{\alpha}),a_{0:n-1}^{1:N})
\end{equation}
{where, for $0\leq k \leq n$,  $x_k^{1:N}(1:k_{\alpha})=(x_k^1(1:k_{\alpha}),\dots,x_k^N(1:k_{\alpha}))\in(\bigotimes_{i=1}^{k_{\alpha}}\mathsf{X}_{\alpha(i)})^N$,
$a_{k}^{1:N}=(a_k^1,\dots,a_k^N)\in\{1,\dots,N\}^N$,
$x_{0:n}^{1:N}(1:k_{\alpha})=(x_0^{1:N}(1:k_{\alpha}),\dots,x_n^{1:N}(1:k_{\alpha}))\in (\bigotimes_{i=1}^{k_{\alpha}}\mathsf{X}_{\alpha(i)})^{(n+1)N}$,
$a_{0:n-1}^{1:N}=(a_{0}^{1:N},\dots,a_{n-1}^{1:N})\in\{1,\dots,N\}^{nN}$.
In particular,} $a_{n-1}^i$ is the index at time $n-1$ of the resampled particle which has the index $i$ at time $n$.
For each $i\in\{1,\dots,N\}$ define the ancestral lineage indices as
\begin{equation}\label{eq:lineage}
b_n^i = i~~{\rm and}~~ b_k^i = a_k^{b_{k+1}^i}\, ,~~~ k\in\{0,\dots,n-1\} \, .
\end{equation}
{For each $i=1,\dots,N$,
define
\begin{equation}\label{eq:smoothingsample}
\bar{x}_{0:n}^{i}(1:k_{\alpha}) := x_{0:n}^{b_{0:n}^i}(1:k_{\alpha}).
\end{equation}}
The following empirical measure then provides an approximation of
$\xi_{n,\alpha(1:k_{\alpha}),\theta}(x_{0:n}(1:k_{\alpha}))$
\begin{equation}\label{eq:target_approx}
\sum_{i=1}^Nw_{n,\alpha,\theta}^i\delta_{{\bar{x}_{0:n}^{i}(1:k_{\alpha})}}(dx_{0:n}) \, .
\end{equation}
This will prove useful in the next section.

The normalization constant
$Z_{n,\alpha,\theta}= \int_{\bigotimes_{i=1}^{k_{\alpha}}\mathsf{X}_{\alpha(i)}^{n+1}} \xi_{n,\alpha(1:k_{\alpha}),\theta}(x_{0:n}(1:k_{\alpha})) d x_{0:n}(1:k_{\alpha}) $ 
can be unbiasedly estimated \cite{delm:04} by
\begin{equation}\label{eq:nc_est}
Z_{n,\alpha,\theta}^N = \prod_{p=0}^n \Big(\frac{1}{N}\sum_{i=1}^N \check{g}_{\theta,\alpha(1:k_{\alpha})}(x_p^i(1:k_{\alpha}),y_p)\Big).
\end{equation}

It is noted that 
particle filters often do not work well in high-dimensions (e.g.~\cite{snyder}).
{However, in some cases, where the target probability is a high and finite dimensional discretization of
an infinite dimensional distribution}
(as will be the case in the context of this article), the algorithm can work quite well; see e.g.~\cite{kantas}.

\subsection{Particle MCMC}
\label{sec:pmcmc}

\blu{
In this section, we focus on approximating
expectations w.r.t.~$\xi_{n,\alpha(1:k_{\alpha})}(x_{0:n}(1:k_{\alpha}),\theta)$, {for a fixed $n$}.
{The notation ${\bar{x}}_{0:n}^{(i)}(1:k_{\alpha}),\theta^i$ 
will refer to the $i^{\textrm{th}}$ sample of a Markov chain designed to approximate expectations
w.r.t.~$\xi_{n,\alpha(1:k_{\alpha})}$. In the algorithms to be presented, $r(\theta^{i-1},\cdot)$
is a proposal density on $\Theta$ which we will assume is a postive probability density w.r.t.~$d\theta$ for any $\theta^{i-1}$.}
{In Algorithm \ref{alg:pmcmc} 
we present an approach to approximate expectations w.r.t. 
$\xi_{n,\alpha(1:k_{\alpha})}(x_{0:n}(1:k_{\alpha}),\theta)$.}
The method in Algorithm \ref{alg:pmcmc} 
is called particle MCMC (PMCMC). 
}

%

\begin{algorithm}[!h]
\blu{\caption{PMCMC Algorithm}
\label{alg:pmcmc}
\begin{itemize}
\item \textbf{Initialize:} 
\begin{itemize}
\item[(0)] Set $i=0$ and sample $\theta^0$ from the prior.
Given $\theta^0$ run the particle filter in Algorithm \ref{algo:pf}
and record the estimate of
$Z_{n,\alpha,\theta^0}^N$
from eq.~\eqref{eq:nc_est}. 
\item[(I)] Select a trajectory $x_{0:n}^i(1:k_{\alpha})$ from the particle filter just run using \eqref{eq:target_approx},
 denote the stored state $\overline x_{0:n}^{(0)}(1:k_{\alpha})$.
\end{itemize}
\item \textbf{Iterate:}
\begin{itemize}
\item[(II)] Set $i=i+1$ and propose $\theta'$ given $\theta^{i-1}$ from a proposal $r(\theta^{i-1},\cdot)$ (described in the main text).
\item[(III)] Given $\theta'$ run the particle filter in Algorithm \ref{algo:pf}
and record the estimate $Z_{n,\alpha,\theta'}^N$.
\item[(IV)] Select a trajectory $x_{0:n}^{s'}(1:k_{\alpha})$ from the particle filter just run using \eqref{eq:target_approx}.
\item[(V)] Set $\theta^i=\theta'$ 
with probability
$$
\min\Big\{
1,
\frac{Z_{n,\alpha,\theta'}^N
\nu(\theta')r(\theta',\theta^{i-1})}
{Z_{n,\alpha,\theta^{i-1}}^N
\nu(\theta^{i-1})r(\theta^{i-1},\theta')}
\Big\}
$$
otherwise $\theta^i=\theta^{i-1}$. 
\item[(VI)] Let $\overline x_{0:n}^{(i)}(1:k_{\alpha})=x_{0:n}^{s'}(1:k_{\alpha})$ if $\theta^i=\theta'$,
otherwise let $\overline x_{0:n}^{(i)}(1:k_{\alpha})=\overline x_{0:n}^{(i-1)}(1:k_{\alpha})$ if $\theta^i=\theta^{i-1}$.\end{itemize}
\end{itemize}}
%
\end{algorithm}
\blu{{The target density associated to the PMCMC kernel described in Algorithm \ref{alg:pmcmc}
on the state-space 
$\Theta\times \Big(\bigotimes_{i=1}^{k_{\alpha}}\mathsf{X}_{\alpha(i)}^{n+1}\Big)^N\times\{1,\dots,N\}^{Nn+1}$ 
is given by
\begin{equation*}
\tilde{\xi}_{n,\alpha(1:k_{\alpha})}(x_{0:n}^{1:N}(1:k_{\alpha}),a_{0:n-1}^{1:N},\theta,s)   
:= \frac1{Z_{n,\alpha}} w_{n,\alpha,\theta}^s Z_{n,\alpha,\theta}^N \psi_{\alpha,\theta}(x_{0:n}^{1:N}(1:k_{\alpha}),a_{0:n-1}^{1:N}) \, ,
\end{equation*}
where we recall that $\psi_{\alpha,\theta}$ is the density of the particle filter \eqref{eq:pfjoint}, 
and $Z_{n,\alpha} = \int_{\Theta} Z_{n,\alpha, \theta}$ is the normalizing constant, 
due to the unbiased property of \eqref{eq:nc_est}.
This is shown in \cite[Theorem 4]{andrieu}, as well as the fact that 
\begin{eqnarray}\nonumber
& \tilde{\xi}_{n,\alpha(1:k_{\alpha})}(x_{0:n}^{1:N}(1:k_{\alpha}),a_{0:n-1}^{1:N},\theta,s) \\
& =  \frac{\xi_{n,\alpha(1:k_{\alpha})}(\bar{x}_{0:n}^{s}(1:k_{\alpha}),\theta)}{N^{n+1}}
\frac{\psi_{\alpha,\theta}(x_{0:n}^{1:N}(1:k_{\alpha}),a_{0:n-1}^{1:N})}
{\check{\mu}_{\theta,\alpha(1:k_{\alpha})}(x_0^{b_0^s}(1:k_{\alpha}))
\prod_{p=1}^n
w_{p-1,\alpha,\theta}^{b_{p-1}^s} \check{f}_{\theta,\alpha(1:k_{\alpha})}(x_{p-1}^{b_{p-1}^s}(1:k_{\alpha}),x_{p}^{b_{p}^s}(1:k_{\alpha}))} \, ,
\label{eq:pmcmc_density}
\end{eqnarray}
where $b_p^s$ is defined in \eqref{eq:lineage}. 
In other words $(\bar{x}_{0:n}^{s}(1:k_{\alpha}),\theta)$ has marginal density $\xi_{n,\alpha(1:k_{\alpha})}$.}
{As a result, consider estimating the integral
$$\int_{\bigotimes_{i=1}^{k_{\alpha}}\mathsf{X}_{\alpha(i)}^{n+1}\times\Theta} 
{\Phi}(x_{0:n}(1:k_{\alpha}),\theta)\xi_{n,\alpha(1:k_{\alpha})}(x_{0:n}(1:k_{\alpha}),\theta)dx_{0:n}(1:k_{\alpha})d\theta$$
for an integrable and real-valued function 
{$\Phi: \bigotimes_{i=1}^{k_{\alpha}}\mathsf{X}_{\alpha(i)}^{n+1}\times\Theta \rightarrow \bbR$. }
\cite{andrieu} show that, for Algorithm \ref{alg:pmcmc}, this above quantity is consistently estimated (that is, the estimate converges almost surely as $N\rightarrow+\infty$) by:}
\begin{equation}\label{eq:pmcmcest}
\frac{1}{N}\sum_{i=1}^N{\Phi}
({\bar{x}}_{0:n}^{(i)}(1:k_{\alpha}),\theta^i) \, .
\end{equation}
As a result, for $n$ fixed one can approximate the r.h.s.~of \eqref{eq:basic_idea} as,
$$
\sum_{l=1}^{k_{\alpha}'}
\tau_{l,\alpha}
\Bigg(\frac{
\sum_{i=1}^N\varphi_{\alpha(2l)}(\overline{x}_{0:n}^{(i)}(l),{\theta}^i)
H_{2l,n,\alpha,\theta}(\overline{x}_{0:n}^{(i)}(1:k_\alpha),{\theta}^i)
}{\sum_{i=1}^N H_{2l,n,\alpha,\theta}(\overline{x}_{0:n}^{(i)}(1:k_\alpha),{\theta}^i)} -
\frac{
\sum_{i=1}^N \varphi_{\alpha(2l-1)}(\overline{x}_{0:n}^{(i)}(l),{\theta}^i)
H_{2l-1,n,\alpha,\theta}(\overline{x}_{0:n}^{(i)}(1:k_\alpha),{\theta}^i)
}{\sum_{i=1}^N H_{2l-1,n,\alpha,\theta}(\overline{x}_{0:n}^{(i)}(1:k_\alpha),{\theta}^i)}\Bigg) \, ,
$$
where we remind the reader that {$\tau_{i,\alpha}=(-1)^{|\alpha(k_{\alpha})-\alpha(2i)|}$}.
We hence refer to using Algorithm \ref{alg:pmcmc} 
in the context of \eqref{eq:basic_idea} within \eqref{eq:mimc} as above as MIPMCMC.
{Note that steps (I), (IV), (VI) can be ignored if one is only interested in estimation related to $\theta$.
The resulting chain has the marginal of \eqref{eq:pmcmc_density} as its target:
$\tilde{\xi}_{n,\alpha(1:k_{\alpha})}(x_{0:n}^{1:N}(1:k_{\alpha}),a_{0:n-1}^{1:N},\theta)$ (\cite{andrieu, chopin2}).}
In the  context of M|-PMCMC one must compute the weights in the expression above regardless, 
and so the joint distribution is required.}

\subsection{SMC$^{2}$}\label{sec:smc2}


To consider the method to be discussed, we start with some definitions.
We define the spaces:
\begin{eqnarray*}
\mathsf{E}_0 & := & \Big(\bigotimes_{i=1}^{k_{\alpha}}\mathsf{X}_{\alpha(i)}\Big)^N\times\Theta \\
\mathsf{E}_n & := & \Big(\bigotimes_{i=1}^{k_{\alpha}}\mathsf{X}_{\alpha(i)}^{n+1}\Big)^N\times\{1,\dots,N\}^{nN}\times\Theta \quad n\geq 1
\end{eqnarray*}
and states
\begin{eqnarray*}
U_{0,\alpha} & := & (X_0^{1:N}(1:k_{\alpha}),\theta) \\
U_{n,\alpha} & := & (X_{0:n}^{1:N}(1:k_{\alpha}),a_{0:n-1}^{1:N},\theta) \quad n\geq 1
\end{eqnarray*}
and note $U_{0,\alpha}\in\mathsf{E}_0$, $U_{n,\alpha}\in\mathsf{E}_n$, $n\geq 1$.

{We now introduce the SM$\textrm{C}^2$ method in Algorithm \ref{alg:smc2}.
This method is 
a type of particle filter which targets {the} sequence of probability distributions
$\{\tilde{\xi}_{n,\alpha(1:k_{\alpha})}(x_{0:n}^{1:N}(1:k_{\alpha}),a_{0:n-1}^{1:N},\theta)\}_{n\geq 0}$ (see \cite[Proposition 1]{chopin2} for a justification) which admit
$\{\xi_{n,\alpha(1:k_{\alpha})}(\theta)\}_{n\geq 0}$ as a particular marginal; we explain how one can estimate expectations w.r.t.~$\xi_{n,\alpha(1:k_{\alpha})}(x_{0:n}(1:k_{\alpha}),\theta)$ below. The convergence (as $N_{\alpha}\rightarrow\infty$) of such an algorithm, then follows the theory of particle approximations of 
Feynman-Kac formulae, as described in \cite{delm:04}.

\begin{algorithm}[!b]
\caption{An  SMC$^{2}$ Algorithm 
{targeting $\tilde{\xi}_{n,\alpha(1:k_{\alpha})}(x_{0:n}^{1:N}(1:k_{\alpha}),a_{0:n-1}^{1:N},\theta)$}}
\label{alg:smc2}
\begin{itemize}
\item \textbf{Initialize}. Set $n=0$, for $i\in\{1,\dots,N_{\alpha}\}$ 
sample $\theta^i$ from the prior $\nu$, 
and $X_0^{i,j}(1:k_{\alpha})$ from $\check{\mu}_{\theta^i,\alpha(1:k_{\alpha})}(\cdot)$, $j\in\{1,\dots,N\}$. 
Compute the weight:
$$
G_{0,\alpha}(u_{0,\alpha}^i) = \frac{1}{N}\sum_{j=1}^N \check{g}_{\theta^i,\alpha(1:k_{\alpha})}(x_0^{i,j}(1:k_{\alpha}),y_0).
$$
 
\item \textbf{Iterate}:
\begin{itemize}
\item[(I)] {\bf Select}: Set $n=n+1$, resample $u_{n-1,\alpha}^{1:N_\alpha}$
using the normalized $\{G_{n-1,\alpha}(u_{n-1,\alpha}^i)\}_{i=1}^{N_\alpha}$,
denoting the resulting samples $\hat{u}_{n-1,\alpha}^{1:N_\alpha}$.
\item[(II)] {\bf Mutate}:
For $i\in\{1,\dots,N_{\alpha}\}$ generate  $\tilde{u}_{n-1,\alpha}^{i}|\hat{u}_{n-1,\alpha}^{i}$ using one iteration of Algorithm \ref{alg:pmcmc} \blu{(here we only require samples from the marginal
$\tilde{\xi}_{n,\alpha(1:k_{\alpha})}(x_{0:n}^{1:N}(1:k_{\alpha}),a_{0:n-1}^{1:N},\theta)$ and so steps
(I), (IV), (VI) of Algorithm \ref{alg:pmcmc} can be ignored)}. 
\item[(III)] {\bf Extend}: For $i\in\{1,\dots,N_{\alpha}\}$ sample $X_n^{i,j}(1:k_{\alpha}),a_{n-1}^{i,j}$, $j\in\{1,\dots,N\}$ from
$$
\prod_{j=1}^N \frac{\check{g}_{\tilde{\theta}^i,\alpha(1:k_{\alpha})}(\tilde{x}_{n-1}^{i,a_{n-1}^{i,j}}(1:k_{\alpha}),y_{n-1})}{\sum_{l=1}^N\check{g}_{\tilde{\theta}^i,\alpha(1:k_{\alpha})}(\tilde{x}_{n-1}^{i,l}(1:k_{\alpha}),y_{n-1})}\check{f}_{\tilde{\theta}^i,\alpha(1:k_{\alpha})}(\tilde{x}_{n-1}^{i,a_{n-1}^{i,j}}(1:k_{\alpha}),x_{n}^{i,j}(1:k_{\alpha})) \, .
$$
Set $u_{n,\alpha}^i = (\tilde{u}_{n-1,\alpha}^{i},x_n^{i,1:N}(1:k_{\alpha}),a_{n-1}^{i,1:N})$\, .
\item[(IV)] {\bf Compute the weight}: For $i\in\{1,\dots,N_{\alpha}\}$
$$
G_{p,\alpha}(u_{n,\alpha}^i) = \frac{1}{N}\sum_{j=1}^N \check{g}_{\theta^i,\alpha(1:k_{\alpha})}(x_n^{i,j}(1:k_{\alpha}),y_n) \, .
$$
\end{itemize} 
\end{itemize}
\end{algorithm}

\subsubsection{Intuition of the Algorithm}

To understand the intution of the algorithm of \cite{chopin2}, 
consider the first step, where the objective is to approximate expectations w.r.t. 
$$
\tilde{\xi}_{0,\alpha(1:k_{\alpha})}(x_{0}^{1:N}(1:k_{\alpha}),\theta) \propto
\Big(\frac{1}{N}\sum_{j=1}^N \check{g}_{\theta,\alpha(1:k_{\alpha})}(x_0^{j}(1:k_{\alpha}),y_0)\Big)
\Big(\prod_{j=1}^N \check{\mu}_{\theta^i,\alpha(1:k_{\alpha})}(x_{0}^{j}(1:k_{\alpha}))\Big) \nu(\theta).
$$
This can be achieved by (self-normalized) importance sampling, just as in the initialization step of Algorithm \ref{alg:smc2}. This is because the term $G_{0,\alpha}(u_{0,\alpha}) $ is an importance weight, which allows one to correct for the discrepancy between the distribution sampled (in the initialization step of Algorithm \ref{alg:smc2}) and the one of interest.

We now want to move our samples, in such a way as to approximate expectations
w.r.t.~$\tilde{\xi}_{1,\alpha(1:k_{\alpha})}(x_{0:1}^{1:N}(1:k_{\alpha}),a_{0}^{1:N},\theta)$.
This can be achieved in the iterate step of Algorithm \ref{alg:smc2}, as we now explain. In the select step, this is a resampling of the samples, just as in the particle filter. The resulting
samples are approximately sampled from $\tilde{\xi}_{0,\alpha(1:k_{\alpha})}(x_{0}^{1:N}(1:k_{\alpha}),\theta)$. The mutate step will now produce new samples which are still approximately sampled from $\tilde{\xi}_{0,\alpha(1:k_{\alpha})}(x_{0}^{1:N}(1:k_{\alpha}),\theta)$, as the transition kernel leaves this probability invariant. 
The extend step now
produces the additional random variables 
needed to approximate expectations w.r.t.~$\tilde{\xi}_{1,\alpha(1:k_{\alpha})}(x_{0:1}^{1:N}(1:k_{\alpha}),a_{0}^{1:N},\theta)$.
The strategy employed leads to the convenient weight function $G_{1,\alpha}(u_{1,\alpha})$ in the next step. 
This corresponds to the ratio, up-to a normalizing constant, of
$\tilde{\xi}_{1,\alpha(1:k_{\alpha})}(x_{0:1}^{1:N}(1:k_{\alpha}),a_{0}^{1:N},\theta)$ to 
the product of $\tilde{\xi}_{0,\alpha(1:k_{\alpha})}(x_{0}^{1:N}(1:k_{\alpha}),\theta)$ 
the proposal used in the extend step. 
Expectations w.r.t.~$\tilde{\xi}_{1,\alpha(1:k_{\alpha})}(x_{0:1}^{1:N}(1:k_{\alpha}),a_{0}^{1:N},\theta)$ can now be approximated again by
self-normalized importance sampling. The algorithm then just continues for the rest of the sequence $\{\tilde{\xi}_{n,\alpha(1:k_{\alpha})}(x_{0:n}^{1:N}(1:k_{\alpha}),a_{0:n-1}^{1:N},\theta)\}_{n\geq 2}$.

The reason {why} one considers the sequence
$\{\tilde{\xi}_{n,\alpha(1:k_{\alpha})}(x_{0:n}^{1:N}(1:k_{\alpha}),a_{0:n-1}^{1:N},\theta)\}_{n\geq 0}$,
instead of the original
$\{\xi_{n,\alpha(1:k_{\alpha})}(x_{0:n}(1:k_{\alpha}),\theta)\}_{n\geq 0}$, is because 
the algorithm associated  with the former is expected to be more efficient than a related SMC algorithm for the latter. {To explain further, one can consider the naive algorithm which samples the initial $\theta$ from the prior (i.e.~$N$ samples) and then run a particle filter with
$N$ associated trajectories $x_{0:n}(1:k_{\alpha})$. The main issue here is of course that one never updates the $\theta$, so that estimates of expectations associated to $\theta$ would be very poor. This is further exacerbated by the path degeneracy problem for particle filters; one does not update the trajectory in the past, and due to the resampling operation the distinctness of the trajectories in the past will be essentially lost.
These latter issues can be circumvented by applying an MCMC kernel of invariant measure $\xi_{n,\alpha(1:k_{\alpha})}(x_{0:n}(1:k_{\alpha}),\theta)$ at each time step to each sample. {However, as we have remarked,
in general PMCMC is considered to be more efficient than this. }}
\blu{The choice of $N$ is an important tuning parameter, which is considered in detail in 
the work \cite{doucet2015efficient}. The recommendation there is $N \propto n$. 
See \cite{chopin2,kantas1} for further insights. 
We do not consider biased methods such as \cite{liu2001combined} here.}
}



\subsubsection{Estimating Expectations with respect to the joint target}

\blu{We now concisely describe how to use the samples $\tilde{u}_{p,\alpha}^{1:N_{\alpha}}$
in order to estimate expectations with respect to the joint (coupled) state and parameter,
using the SMC analogue of the PMCMC estimator \eqref{eq:pmcmcest},
hence enabling estimation of \eqref{eq:basic_idea}. As described in 
Section \ref{sec:pmcmc} and Algorithm \ref{alg:pmcmc}, we require samples from 
${\tilde{\xi}_{n,\alpha(1:k_{\alpha})}(x_{0:n}^{1:N}(1:k_{\alpha}),a_{0:n-1}^{1:N},\theta,s)}$ to achieve this. 
As suggested in Algorithm \ref{alg:smc2} step (II),
assume for the moment that we have ignored steps (I), (IV), (VI) of Algorithm \ref{alg:pmcmc}, 
so for each SMC sampler particle we now need to sample $S$ 
and construct $\overline x_{0:n}^s(1:k_{\alpha})$.
Of course the full PMCMC Algorithm \ref{alg:pmcmc} could be used, in which case no further work is required.
However, since $s$ and $\overline x_{0:n}^s(1:k_{\alpha})$ do not appear in the recursion, 
and are not required until one estimates a joint expectation, 
this is considered as a separate step here.}

\blu{At any time $n$, sample $S_n^i\in\{1,\dots,N\}$, $i\in\{1,\dots,N_{\alpha}\}$ with probability
\begin{equation}\label{eq:s_def}
\mathbb{P}(S_n^i=j|\tilde{u}_{n,\alpha}^i) = \frac{\check{g}_{\tilde{\theta}^i,\alpha(1:k_{\alpha})}(\tilde{x}_{n}^{i,j}(1:k_{\alpha}),y_n)}{\sum_{l=1}^N \check{g}_{\tilde{\theta}^i,\alpha(1:k_{\alpha})}(\tilde{x}_{p}^{i,l}(1:k_{\alpha}),y_n)}.
\end{equation}
{For $p{\leq}n$ recall the definition \eqref{eq:lineage}, and define
\begin{equation}\label{eq:vee}
\overline{x}_p^{(i)}(1:k_\alpha):= {\tilde x}^{i,b_{n}^{s^i_n}}_{p}(1:k_\alpha) \, .
\end{equation}}
\blu{Note this construction corresponds to sampling from \eqref{eq:target_approx},
as is done in Algorithm \ref{alg:pmcmc}. 
The notation $\overline{x}_p^{(i)}(1:k_\alpha)$ (which is redundant with the PMCMC notation)
has been used on purpose.
Now an SMC consistent estimator can be constructed analogous to the PMCMC estimator 
given in \eqref{eq:pmcmcest}, using the following empirical measure 
{$$
\eta_{n,\alpha}^{N_\alpha} :=
\frac{1}{N}\sum_{i=1}^{N_\alpha} \delta_{\overline{x}_{0:n}^{(i)}(1:k_\alpha)}
\, .
$$}}
Let 
$l\in\{1,\dots,k_{\alpha}\}$, 
$\varphi:\mathbb{N}_0^d\times\mathsf{X}^{n+1}\times\Theta\rightarrow\mathbb{R}$,
$H:\mathbb{N}_0^d\times\bigotimes_{i=1}^{k_{\alpha}}\mathsf{X}_{\alpha(i)}^{n+1}\times\Theta\rightarrow\mathbb{R}$ (measurable and integrable w.r.t.~${\xi}_{n,\alpha(1:k_{\alpha})}$).
\blu{Recall that following from \eqref{eq:pmcmc_density} one has}
\begin{eqnarray*}
& \int_{\mathsf{E}_p\times\{1,\dots,N\}} \varphi_{\alpha(l)}({\bar{x}}_{0:n}^s({l}),\theta)
H_{\alpha}({\bar{x}}_{0:n}^s(1:k_{\alpha}),\theta)
\tilde{\xi}_{n,\alpha(1:k_{\alpha})}(x_{0:n}^{1:N}(1:k_{\alpha}),a_{0:n-1}^{1:N},\theta,s)d(x_{0:n}^{1:N}(1:k_{\alpha}),a_{0:n-1}^{1:N},\theta,s) \\
& =\int_{\bigotimes_{l=1}^{k_{\alpha}}\mathsf{X}_{\alpha(l)}^{n+1}\times\Theta}
\varphi_{\alpha(l)}(x_{0:n}({l}),\theta)
H_{\alpha}(x_{0:n}(1:k_{\alpha}),\theta)
{\xi}_{n,\alpha(1:k_{\alpha})}(x_{0:n}(1:k_{\alpha}),\theta)d(x_{0:n}(1:k_{\alpha}),\theta) \, .
\end{eqnarray*}
One can now consistently estimate the above expectation analogous to \eqref{eq:pmcmcest}
using \eqref{eq:vee} as follows
{$$
\eta_{n,\alpha}^{N_{\alpha}}\big(\varphi_{\alpha(l)} {H_{\alpha}} \big) = 
\frac{1}{N_{\alpha}}\sum_{i=1}^{N_{\alpha}}
\varphi_{\alpha(l)}(\overline{x}_{0:n}^{(i)}(l),\tilde{\theta}^i)
H_{\alpha}(\overline{x}_{0:n}^{(i)}(1:k_\alpha),\tilde{\theta}^i)
 \, .
$$}}
Hence we have the following estimate of \eqref{eq:basic_idea} {
$$
\Delta\mathbb{E}^{N_{\alpha}}_{\pi_{n,\alpha}}[\varphi_{\alpha}(X_{0:n},\theta)] := \sum_{i=1}^{k_{\alpha}'}
\tau_{i,\alpha}
\Bigg(\frac{
\eta_{n,\alpha}^{N_{\alpha}}(\varphi_{\alpha(2i)}H_{2i,n,\alpha,\theta})
}{\eta_{n,\alpha}^{N_{\alpha}}(H_{2i,n,\alpha,\theta})} -
\frac{
\eta_{n,\alpha}^{N_{\alpha}}(\varphi_{\alpha(2i-1)}H_{2i-1,n,\alpha,\theta})
}{\eta_{n,\alpha}^{N_{\alpha}}(H_{2i-1,n,\alpha,\theta})}\Bigg) \, ,
$$
where we remind the reader that {$\tau_{i,\alpha}=(-1)^{|\alpha(k_{\alpha})-\alpha(2i)|}$}.}

\section{Theoretical Results}\label{sec:theory}

We now consider the MISMC$^{2}$ procedure in the previous section,
\blu{however the results naturally extend to the static 
MIPMCMC method as well, which appears as a component of this method.}
We will analyze the variance of our MI method(s), under 
the following assumptions.
\begin{enumerate}[label={(A\arabic*)}]
\item There exist $0< \underline{C}< \overline{C}<+\infty$
such that for every $\alpha\in\mathcal{I}$, $\theta\in\Theta$,
$(x,y)\in\mathsf{X}_{\alpha}\times\mathsf{Y}_{\alpha}$
$$
\underline{C} \leq g_{\theta,\alpha}(x,y) \leq \overline{C}.
$$\label{hyp:A}
\item

For every $n\geq 0$, $\varphi:\mathbb{N}_0^d\times\mathsf{X}^{n+1}\times\Theta\rightarrow\mathbb{R}$ bounded, every $\alpha\in\mathcal{I}$,
there exist a $C(\alpha(1:k_{\alpha}))$, with $\lim_{\min_{1\leq i\leq d}\alpha_i \rightarrow+\infty}C(\alpha(1:k_{\alpha}))=0$,
such that for any collection of scalar, bounded random variables $\beta(\alpha(1:k_{\alpha}),2i,2i-1)$,
$i\in\{1,\dots,k_{\alpha}'\}$ we have almost surely
$$
\sup_{(x_{0:n}(1:k_{\alpha}),\theta)\in(\bigotimes_{i=1}^{k_{\alpha}}\mathsf{X}_{\alpha(i)}^{n+1})\times\Theta}\Big|
\Big\{\sum_{i=1}^{k_{\alpha}'}\tau_{i,\alpha}
\beta(\alpha(1:k_{\alpha}),2i,2i-1)
\Big[
\varphi_{\alpha(2i)}(x_{0:n}(1:k_{\alpha}),\theta) -
$$
$$
\varphi_{\alpha(2i-1)}(x_{0:n}(1:k_{\alpha}),\theta)\Big]\Big\}
\Big| \leq
C(\alpha(1:k_{\alpha}))\sum_{i=1}^{k_{\alpha}'}|\beta(\alpha(1:k_{\alpha}),2i,2i-1)|^2.
$$\label{hyp:B}
\end{enumerate}
{We remind the reader again that $\tau_{i,\alpha}=(-1)^{|\alpha(k_{\alpha})-\alpha(2i)|}$}.

\ref{hyp:A} is a strong, but standard, assumption that has been used in the HMM literature, particularly in the SMC context; see for instance \cite{delm:04}.
\ref{hyp:B} is certainly non-standard and in general one would like to deduce under simpler hypotheses. In our efforts to achieve this, we have not found a suitable technical approach and leave this more involved analysis to future work.
The following result is the culmination of our efforts.
The expectation below is w.r.t.~the randomness in the SMC$^{2}$ algorithm.
{
\begin{theorem}\label{thm:main}
Assume \ref{hyp:A}-\ref{hyp:B}.  Then for every $n\geq 0$, $\varphi:\mathbb{N}_0^d\times\mathsf{X}^{n+1}\times\Theta\rightarrow\mathbb{R}$ bounded and every $\alpha\in\mathcal{I}$,
there exist a $C(\alpha(1:k_{\alpha}))$, with $\lim_{\min_{1\leq i\leq d}\alpha_i \rightarrow+\infty}C(\alpha(1:k_{\alpha}))=0$, such that:
$$
\mathbb{E}\Big[\Big(\Delta\mathbb{E}^{N_{\alpha}}_{\pi_{n,\alpha}}[\varphi_{\alpha}(X_{0:n},\theta)]-\Delta \mathbb{E}_{\pi_{n,\alpha}}[\varphi_{\alpha}(X_{0:n},\theta)]\Big)^2\Big] \leq \frac{C(\alpha(1:k_{\alpha}))}{N_{\alpha}}
$$
and
$$
\bigg|\mathbb{E}\Big[\Delta\mathbb{E}^{N_{\alpha}}_{\pi_{n,\alpha}}[\varphi_{\alpha}(X_{0:n},\theta)]
-\Delta \mathbb{E}_{\pi_{n,\alpha}}[\varphi_{\alpha}(X_{0:n},\theta)]\Big]\bigg| \leq \frac{C(\alpha(1:k_{\alpha}))}{N_{\alpha}}.
$$
\end{theorem}
}
\begin{proof}
Follows directly from Lemma \ref{lem:1} and Proposition \ref{prop:1} in the appendix.
\end{proof}

It is noted that our bound depends upon the time parameter and $d$ and we do not address these aspects in our subsequent discussion.


\subsection{MIMC considerations}\label{ssec:mimc}

Define a multi-index estimator as{
\begin{equation*}
\widehat{\varphi}_{\cI}^{\rm MI} := \sum_{\alpha\in \cI}
\Delta\mathbb{E}^{N_{\alpha}}_{\pi_{n,\alpha}}[\varphi_{\alpha}(X_{0:n},\theta)].
\end{equation*}
\blu{Below the necessary assumptions are given, which are common for multi-index methods.}
$\textrm{Cost}(X_{\alpha})$ denotes the cost of sampling the discretized random variable $X_{\alpha}$.
Recall $C(\alpha(1:k_\alpha))$ appears in Theorem  \ref{thm:main} and Assumption \ref{hyp:B}.

\begin{ass}[MISMC$^2$ rates]\label{ass:mimc2}
 For every $n\geq 0$, there exists $C<+\infty$, $w_i, \beta_i, \gamma_i >0$
for $i=1,\dots, d$, such that for every $\alpha\in\mathbb{N}_0^d$:
\begin{itemize}
\item[{\rm (a)}] $\left|\Delta \mathbb{E}_{\pi_{n,\alpha}}[\varphi_{\alpha}(X_{0:n},\theta)] \right|
\leq C \prod_{i=1}^d 2^{-w_i\alpha_i}$;
\item[{\rm (b)}] $C(\alpha(1:k_\alpha)) \leq C \prod_{i=1}^d 2^{-\beta_i\alpha_i}$;
\item[{\rm (c)}] {\rm Cost}$(X_{\alpha}) \leq C \prod_{i=1}^d 2^{\gamma_i \alpha_i}.$
\footnote{To be precise, there would typically be different constants, 
but it obviously suffices to take the largest.}
\end{itemize}
\end{ass}
}

{
Before presenting the main MISMC$^2$ theorem, we need to introduce some
index sets, which relate to Assumption \ref{ass:mimc2}.
The tensor product index set is defined by
\begin{equation}\label{eq:tensor}
\mathcal{I}_{\alpha^*}:= \{\alpha=(\alpha_1,\dots,\alpha_d)\in\mathbb{N}_0^d:
\alpha_1\in\{0,\dots,\alpha^*_1\},\dots,\alpha_d\in\{0,\dots,\alpha^*_d\}\}\, .
\end{equation}
The total degree index set for $L \in \bbR_+$ and $\zeta \in \mathbb{R}^d_+$ is defined as
\begin{equation}\label{eq:tensor}
\mathcal{I}_{\zeta,L}^{\rm TD}:= \left \{\alpha \in\mathbb{N}_0^d:
\sum_{i=1}^d \alpha_i \zeta_i \leq L \right \}\, .
\end{equation}
It is suggested in Sec. 2.2 of \cite{mimc} (and verified in numerical examples) that
the optimal index set is given by $\mathcal{I}_{L}^{\rm TD}=\mathcal{I}_{\zeta^*,L}^{\rm TD}$,
where $z^*_i \propto \log(2)(w_i+(\gamma_i-\beta_i)/2)$ and $\sum_{i=1}^d z^*_i =1$.
See \cite{mimc} for a detailed investigation of the various relationships between the rates of convergence
and these index sets.
The methodology developed is applicable to general index sets, but we will present the proposition 
for only a simplified set of circumstances in the interest of clarity and simplicity.
}

{
\begin{prop}
\label{pro:mimcmc}
Assume \ref{hyp:A}-\ref{hyp:B}, Assumption \ref{ass:mimc2} and that
$\beta_i>\gamma_i$, for all $i=1,\dots, d$, and 
one of the following cases holds
\begin{itemize}
\item[{\rm [A]}]{$\cI = \mathcal{I}_{\alpha^*}$ and $\sum_{i=1}^d \gamma_i/w_i \leq 2$; or}
\item[{\rm [B]}]{$\cI = \mathcal{I}_{L}^{\rm TD}$.}
\end{itemize}
Then there exist $C<+\infty$,
and either $\alpha^*=(m_1,\dots,m_d)\in\mathbb{N}_0^d$ in case {\rm [A]} or $L \in \bbR_+$ in case {\rm [B]},
and $\{ N_\alpha \}_{\alpha \in \cI}$, such that for any $\varepsilon>0$:
$$
\bbE\left [ \left (\widehat{\varphi}_{\cI}^{\rm MI} - \mathbb{E}[\varphi(X_{0:n},\theta)] \right )^2 \right ]
\leq C \varepsilon^2 \ ,
$$
for a cost of $\cO(\varepsilon^{-2})$.
\end{prop}}
\begin{proof}
\blu{Standard optimization of cost as a function of $N_\alpha$ for a fixed variance 
yields that 
$$N_\alpha = \varepsilon^{-2} 
\left( \sum_{\alpha \in \cI}\sqrt{C(\alpha(1:k_\alpha)){\rm Cost}(X_\alpha)} \right)^{-1}
\sqrt{C(\alpha(1:k_\alpha))/{\rm Cost}(X_\alpha)}\, ,$$
where $C(\alpha(1:k_\alpha))$ and ${\rm Cost}(X_\alpha)$ are defined in Assumption \ref{ass:mimc2} (b-c).}
Under the assumptions above, and following from Theorem \ref{thm:main},
the proof {for case {\rm [A]} is the same as that of Proposition 3.2 in \cite{jklz}.
Case {\rm [B]} follows from Theorem 2.2 of \cite{mimc}
(see also Theorem 2 of \cite{giles1}).}
\end{proof}

\blu{Note that $\varepsilon^2$ here represents the asymptotic MSE and the proposition
above relates this to the cost. 
Simply put, the proposition states that the cost is proportional to the inverse of the MSE,
which is called the {\em canonical case} 
because it is the best one can expect from any Monte Carlo method.}
This can be readily generalized to different relationship between the coefficients $(w_i,\beta_i,\gamma_i)$.
{There are many different cases in general, but the rules of thumb are that
(i) the complexity has a logarithmic penalty if $\beta_j \leq \gamma_j$ for any $j$, and
(ii) there is a smaller exponent on $\varepsilon$ as well if $\beta_j < \gamma_j$ for any $j$.
The various conditions can be derived in a similar manner as in
\cite{mimc} (see also \cite{giles1} and \cite{jklz} for some discussion).
Note that if $\sum_{i=1}^d \gamma_i/w_i > 2$ instead in case {\rm [A]} then the cost is
$\varepsilon^{-\sum_{i=1}^d \gamma_i/w_i}$,
corresponding to the cost of a single realization at the finest discretization.
In this case, the cost of MLMC will still be at least as large,
because a single realization at the finest discretization
of the tensor product index set will always be required.}

\section{Numerical Results}
\label{sec:numerics}
\subsection{Modelling} \label{ssec:set-up}
We illustrate the performance of the proposed methods on the Bayesian parameter inference problem of a partially observed stochastic system which is the solution to a SPDE. Comparisons are made with sampling from the most precise discretization of the underlying stochastic system using either PMCMC or $\text{SMC}^2$. {The objective here is to illustrate the theory and test the applicability of the method under weaker assumptions than provided by the theory.
Therefore, we will restrict attention to the total degree index set $\cI_\alpha$,
despite its suboptimality in this example.}

We consider the 
stochastic heat equation on a one-dimensional domain $[0,1]$ over the time interval $[0,T]$, i.e.
\begin{equation*}
	\frac{\partial u}{\partial t} = \frac{\partial^2 u}{\partial x^2} + a u + \theta\dot{W_t}
\end{equation*}
with the Dirichlet boundary condition and initial value $u(x,0) = u_0(x) = \sum_{k=1}^{\infty}u_{k,0}\,e_k(x)$ for $x \in (0,1)$. The eigenfunction $e_k(x) = \sqrt{2}\sin(k\pi x)$ has the corresponding eigenvalue $\lambda_k = k^2\pi^2$ and the noise $W_t$ is the space-time white noise, i.e.~the cylindrical Brownian motion given by $W_t = \sum_{k=1}^{\infty}\sqrt{q_k}e_k\beta_t^k$, where $\beta_t^k$ $(k \geq 1)$ are i.i.d.~scalar Brownian motions. The hidden process is assumed to be modelled by the solution to this SPDE with $q_k = 1$ and $u_{k,0} = 1$ for {$1 \leq k \leq K_{\rm max}$ and $u_{k,0}=0$ otherwise}.

Pointwise observations of the process are obtained at times {$t(n) = n\delta$ for $n=1,2,...,100$ and $\delta = 0.001$}, and at the locations $x_1=1/3$ and $x_2=2/3$ under an additive Gaussian noise with mean zero and variance $\tau^2 = 1$. If we denote the observation vector at time $t(n)$ by $y_n = (y_{n,1},y_{n,2})^T$, the corresponding likelihood function is
\begin{equation*}
	g(x_n,y_n) \propto \prod_{i=1}^{2}\text{exp}\left(-\frac{1}{2\tau^2}\left(y_{n,i}-u(x_i,t(n))\right)^2\right)
\end{equation*}
where $u(x_i,t(n))$ is the solution of the above SPDE at time $t(n)$ and location $x_i$ and note that $u(x,t) = \sum_{k=1}^{\infty}u_{k,t}\,e_k(x)$. The model parameter $\theta$ is assumed to be unknown and is assigned a prior distribution $\text{Gamma}(1,\sqrt{0.1})$ where $\text{Gamma}(a,b)$ represents the Gamma distribution with shape parameter $a$ and scale parameter $b$. 
A fixed sequence of observations $y_{1:100}$ is simulated with $a = 1/2$ and $\theta = \sqrt{0.1}$.

The problem of interest is the Bayesian static parameter estimation of $\theta$ from the above-mentioned model sequentially for each $n$ as the data arrives. {Our ultimate goal is to approximate
$\mathbb{E}_{\pi_{n}}[\varphi(\theta)]$, where $\varphi(\theta) = \theta$ and $\pi_{n}$
is the posterior density of $\theta$, given $y_{1:n}$, induced by the HMM with no discretization bias. }
In this case, we are interested in the posterior mean of the model parameter $\theta$.

Given the approximation multi-index $\alpha^* = (m_x, m_t)$, we will
{estimate
$\mathbb{E}_{\pi_{n},\cI_{\alpha^*}}[\varphi(\theta)]=\mathbb{E}_{\pi_{n,\alpha^*}}[\varphi_{\alpha^*}(\theta)]$
to approximate $\mathbb{E}_{\pi_{n}}[\varphi(\theta)]$,
where $\pi_{n,\alpha^*}$ is the posterior distribution 
associated with the multi-index $\alpha^*$ and $\pi_{n}$ is the target posterior distribution.}

We adopt the exponential Euler scheme developed in \cite{spde_disc} for discretizing the underlying hidden process. To be precise, at a multi-index $\alpha = (\alpha_x, \alpha_t)$, the above SPDE is solved with the first $K_\alpha = K_0 \times 2^{\alpha_x}$ eigenfunctions and $M_\alpha = M_0 \times 2^{\alpha_t}$ time steps as follows
\begin{equation}
	\label{expEuler}
	u_{\alpha,k,i+1} = e^{-\lambda_kh}u_{\alpha,k,i} + \frac{1-e^{-\lambda_kh}}{\lambda_k}a u_{\alpha,k,i} + r_{k,i}
\end{equation}
where $r_{k,i} \sim N\left(0,\frac{\theta^2(1-e^{-2\lambda_kh})}{2\lambda_k}\right)$ for $k=1,...,K_\alpha$ and $i=0,1,...,M_\alpha-1$. The time step-size $h = \delta/M_\alpha$ and $u_{\alpha,k,i}$ is the solution for the coefficient associated with the $k^{th}$ eigenfunction, $i^{th}$ time step and the discretization index $\alpha$.

The coupling of the $k_\alpha$ $(1 \leq k_\alpha \leq 4)$ discretized probability laws is constructed as follows. We start with the simulation of the most expensive random variable that corresponds to the multi-index $\alpha$. For simulations involving $\alpha_x - 1$, only the subset of the first $K_{\alpha-e_x}$ components are retained. For simulations involving $\alpha_t - 1$, $r_{k,i}$ in (\ref{expEuler}) is replaced by $\widehat{r}_{k,i} = e^{-\lambda_kh}\widetilde{r}_{k,2i} + \widetilde{r}_{k,2i+1}$ \cite{chernov2016multilevel} for $i=0,1,...,M_{\alpha-e_t}-1$, where $\{\widetilde{r}_{k,i}\}_{i=0}^{M_\alpha-1}$ are simulated with respect to the multi-index $\alpha$.

Assumption \ref{ass:mimc2} (b) was verified directly by estimating the quantity in Theorem \ref{thm:main}
using the empirical variance over 20 multi-increment estimators.
The values $\beta_x = 1$ and $\beta_t = 2$ were fit, for $\gamma_x=\gamma_t=1$,
which is consistent with the results in \cite{jklz}.
{We also assume $w_i = \beta_i/2$, as in \cite{jklz}, and this is verified numerically.
It is noted that assumption \ref{hyp:B} is likely not satisfied in this example, and so the numerical
results are testing the applicability of the method under weaker assumptions than provided by the theory.}

\blu{Following from the linear Gaussian form of (\ref{expEuler}), 
$u_{\alpha,k,i}$ is Gaussian. 
Since the observations are also linear and Gaussian,
the posterior on the state path is Gaussian can be computed exactly (for fixed parameters $\theta$) 
using the classical Kalman smoother \cite{law2015data}.
In fact, it can be computed without time discretization error, as shown in \cite{jklz}.
Following from standard identities for Gaussian random variables, 
its normalizing constant (the true marginal likelihood $p(y_{0:n}|\theta)$) 
can be computed exactly as well.
As a result, the true likelihood calculated from 
the Kalman smoother with high spatial resolution and no time discretization error
is used within standard MCMC to produce a ground truth, 
denoted by $\bbE(\varphi)$.
The MSE (denoted $\varepsilon^2$ in Proposition \ref{pro:mimcmc}) 
of the approximations is then computed as follows. 
For a given estimator $\hat \varphi$ the MSE is estimated using 
$$
\frac1{R}\sum_{r=1}^{R} (\hat \varphi^{(r)} - \bbE(\varphi))^2 \, ,
$$ 
where $\hat \varphi^{(r)}$ is a realization of the estimator, i.e. using MCMC, 
MIPMCMC, or MISMC$^2$.}

\subsection{Results using PMCMC} \label{sec:numpmcmc}

\begin{figure}
	\centering
	\includegraphics[scale=0.7]{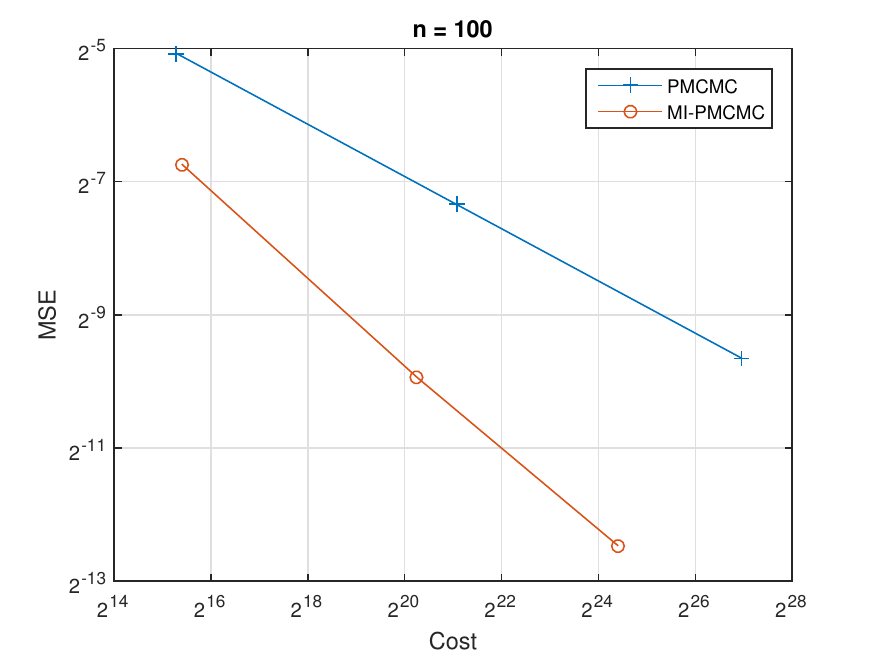}
	\caption{MSE v.s. Cost at time index $n = 100$ for the PMCMC method}
	\label{fg:PMCMC}
\end{figure}

We consider the estimation of the posterior mean of the model parameter $\theta$ 
in this section with $n$ fixed and $n = 100$. 
The proposed MIPMCMC method is implemented, as well as
a standard PMCMC at the finest discretization level.
\blu{The number of particles $N=500$ is fixed as well.}


{Following the optimal choice of discretization $K = M^2$ as discussed in \cite{spde_disc}, the cost for a single realization is proportional to $M^3$. 
This results in the optimal cost for the ordinary PMCMC of $\cO(\varepsilon^{-5})$.
For the MIPMCMC method, we let $m_x = 2m_t \geq 2\text{log}(\varepsilon/2)$
and use the \blu{optimal $N_\alpha \propto \varepsilon^{-2}m_x2^{-\alpha_x-3\alpha_t/2}$, 
as mentioned in the proof of Proposition \ref{pro:mimcmc} and discussed further in \cite{jklz} and references therein.
The proportionality sign arises because the constants are unknown in the terms 
appearing in Assumption \ref{ass:mimc2} (b-c). 
In practice these are estimated along with the rates using simulations at lower levels.}
The cost is dominated by $\cO(\varepsilon^{-3})$ (where $3=\sum_{i=x,t}\gamma_i/w_i$ -- see discussion following Proposition \ref{pro:mimcmc}). }
Both algorithms are implemented for 20 runs and with the most precise discretization index $\alpha^* = (2,1), (4,2), (6,3)$. 
\blu{The MSE is then estimated using these $R=20$ realizations.}

The MSE vs cost plot is illustrated in Figure \ref{fg:PMCMC}.The cost rates are verified numerically as in Figure \ref{fg:PMCMC} and are consistent with 
\cite{jklz}. The fitted rate is about $-5.1$ for the ordinary PMCMC method and $-3.1$ for MIPMCMC.
{It is noted that in the context of MLPMCMC for this example,
i.e. refining once in both $(x,t)$ at each level,
one will find $\gamma=3$, $\beta=2$, $\alpha=\beta/2$,
and $2+(\gamma-\beta)/\alpha = 3 (= \gamma/\alpha)$.
In other words, the rate is the same as we obtain here for MIPMCMC \cite{giles1}.}

\subsection{Results on {$\text{SMC}^2$}}


The proposed $\text{SMC}^2$ method as well as the ordinary $\text{SMC}^2$ method are implemented with the most precise discretization indices $\alpha^* = (2,1), (4,2), (6,3), (8,4)$ and as above the number of particles 
is fixed at $N=500$. The proposed $\text{SMC}^2$ method is run with the optimal choice of $N_\alpha \propto \varepsilon^{-2}m_x2^{-\alpha_x-3\alpha_t/2}$ as discussed in \cite{jklz} and subsection \ref{sec:numpmcmc}. The ground truth in this case is calculated by the weighted average of the $\theta$ particles from the iterated batch importance sampling algorithm \cite{chopin2002sequential} with true likelihood increments derived from the Kalman techniques, which is used for computing the MSE of the approximations. Both algorithms are implemented for 20 runs and the MSE is estimated using these realizations.

{The same rate is expected as in subsection \ref{sec:numpmcmc}, under the same choices of
$(m_x,m_t)$ and $N_\alpha$.}
This is verified numerically, as illustrated in Figure \ref{fg:cost_error}, which displays the MSE vs cost plot at different time index $n \in \{50,65,80,100\}$. The fitted rate is about $-5.2$ for the ordinary $\text{SMC}^2$ and $-3$ for the multi-index $\text{SMC}^2$ method.

\begin{figure}
	\centering
	\subfigure{{\includegraphics[height=6cm,width=0.48\textwidth]{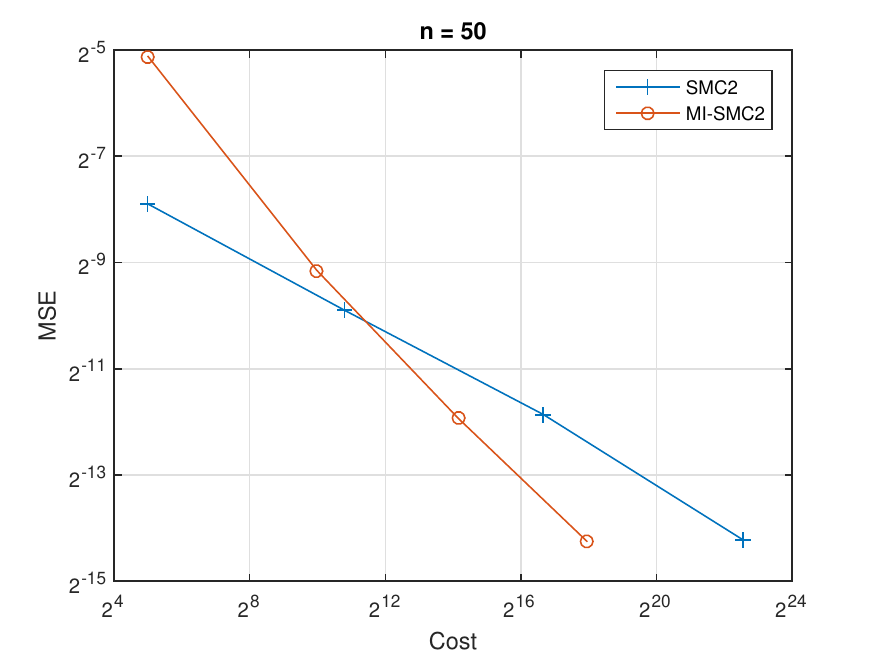}}} \,\,\,
	\subfigure{{\includegraphics[height=6cm,width=0.48\textwidth]{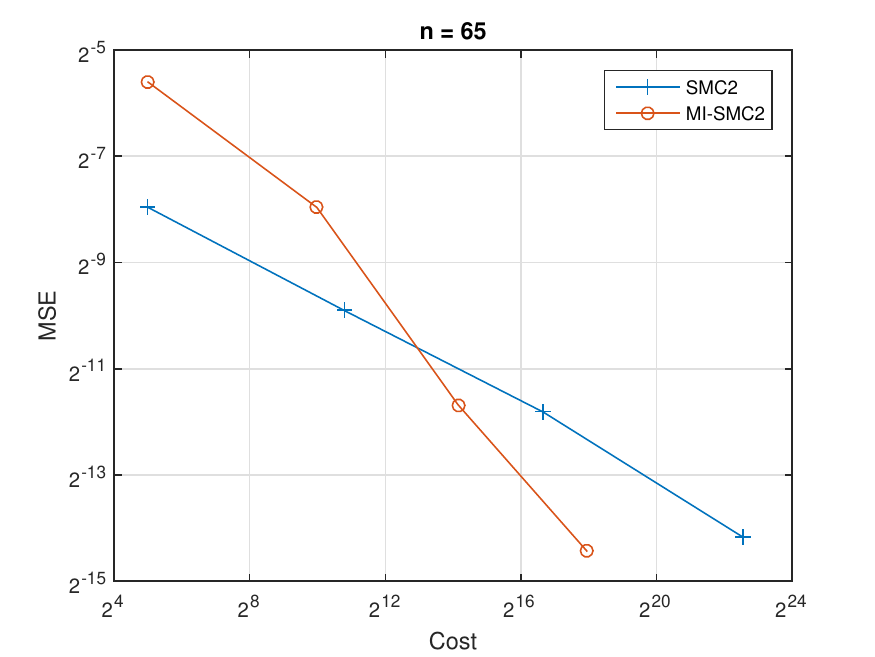}}} \\
	\subfigure{{\includegraphics[height=6cm,width=0.48\textwidth]{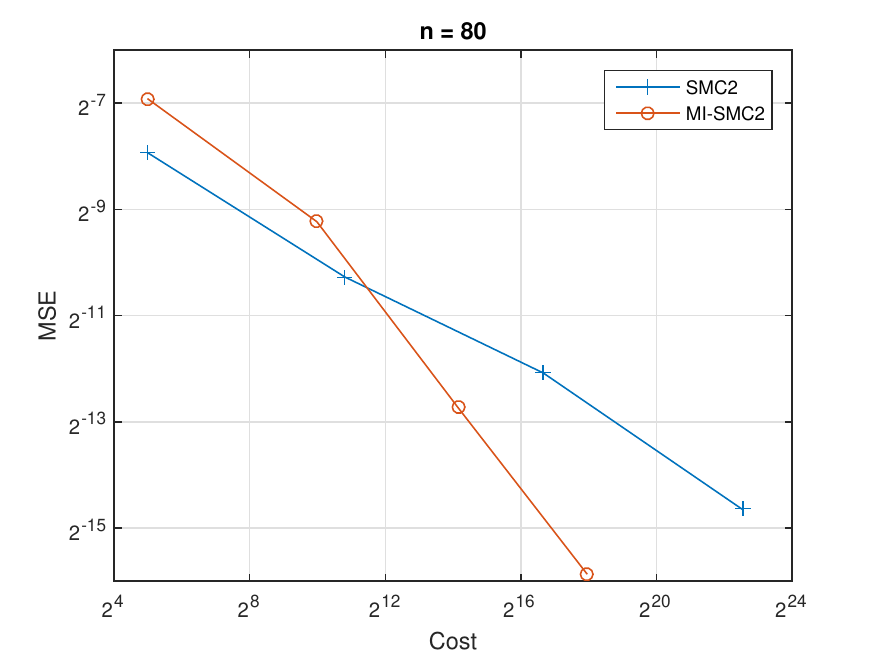}}} \,\,\,
	\subfigure{{\includegraphics[height=6cm,width=0.48\textwidth]{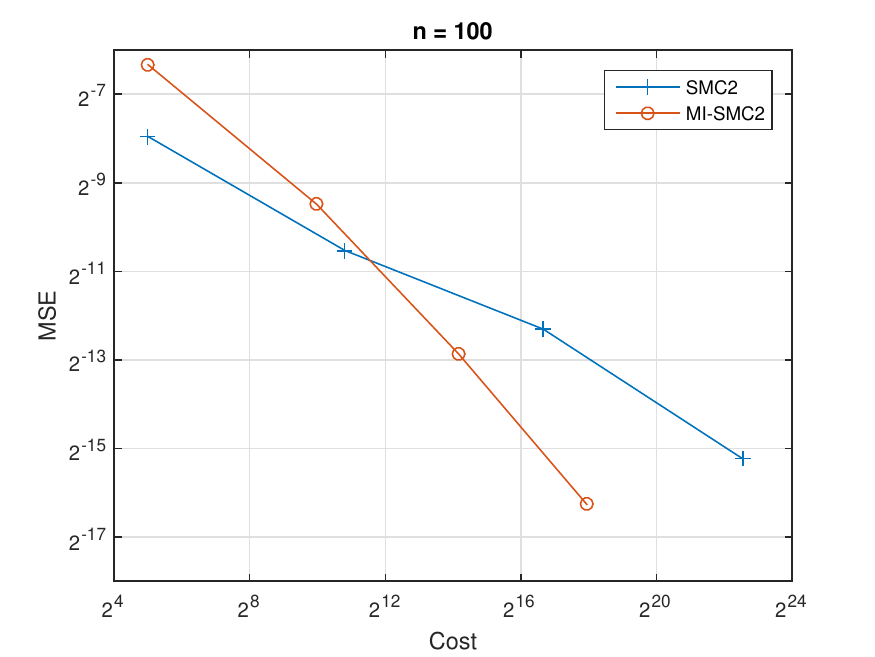}}}
	\caption{MSE v.s. Cost at different time index $n \in \{50,65,80,100\}$}
	\label{fg:cost_error}
\end{figure}

\section{Conclusion}

MISMC$^2$ and MIPMCMC are introduced. 
It is proven under strong assumptions 
that these methods achieve a better rate of complexity with respect to 
MSE than their single level counterparts, and this can even be canonical $1/$MSE. 
The algorithm is tested on a typical numerical example which may not satisfy the 
required assumptions, and the results are verified.
This makes us optimistic that theoretical results 
for the present algorithm can be obtained under weaker assumptions.
Another interesting direction for future research is further exploration of the method 
in practical scenarios and with optimal index sets.

\subsubsection*{Acknowledgements}
{We would like to thank Abdul-Lateef Haji-Ali for useful discussions relating to the material in this paper.}
AJ was supported by a KAUST CRG4 grant ref: 2584 and KAUST baseline funding.
K.J.H.L. \& A.J. were supported by the U.S. Department of Energy, Office of Science, Office of Advanced Scientific Computing Research (ASCR),
under field work proposal number ERKJ333.
KJHL was additionally supported by 
The Alan Turing Institute under the EPSRC grant EP/N510129/1.
He was also funded in part by Oak Ridge National Laboratory Directed Research and Development Seed funding.

\appendix

\section{Main Proofs}

Let $(E,\mathcal{E})$ be a measurable space. The
supremum norm is written as $\|f\| = \sup_{u\in E}|f(u)|$.
 We will consider a non-negative operator
{$K : E \times \mathcal{E} \rightarrow \bbR_+$,  finite measure} $\mu$ on $(E,\mathcal{E})$  and a real-valued, measurable $f:E\rightarrow\mathbb{R}$ and define the operations:
%
\begin{equation*}
    \mu K  : A \mapsto \int K(u, A) \, \mu(du)\ ;\quad
    K f :  u \mapsto \int f(v) \, K(u, dv).
\end{equation*}
We also write $\mu(f) = \int f(u) \mu(du)$.

Recall the definition of $\mathsf{E}_p$ in Section \ref{sec:smc2} and denote by $\mathcal{E}_p$ the associated $\sigma-$algebra.
Let $p\geq 1$ and denote by $M_p:\mathsf{E}_{p-1}\times\mathcal{E}_p\rightarrow[0,1]$ the Markov kernel which {is a compositon of
\begin{enumerate}
\item{A marginal PMCMC kernel $\bar{M}_p:\mathsf{E}_{p-1}\times\mathcal{E}_{p-1}\rightarrow[0,1]$, as in Algorithm \ref{alg:pmcmc} (ignoring $s$),}
\item{Followed by
the sampling of, for $i\in\{1,\dots,N_{\alpha}\}$ $X_p^{i,j}(1:k_{\alpha}),a_{p-1}^{i,j}$, $j\in\{1,\dots,N\}$ in Algorithm \ref{alg:smc2}, the iterate step.}
\end{enumerate}}
Denote by $\tilde{\eta}_0$ as the initial probability measure (on $(\mathsf{E}_0,\mathcal{E}_0)$)
of  $\theta^i$ and $X_0^{i,j}(1:k_{\alpha})$  $j\in\{1,\dots,N\}$ in Algorithm \ref{alg:smc2}, the initialization step.
Define the probability measure on $(\mathsf{E}_p\times\{1,\dots,N\}),\mathcal{E}_p\vee {2^{\{1,\dots,N\}}})$, $\eta_{p,\alpha}$:
$$
\eta_{p,\alpha}(d(u_{p,\alpha},s)) := \Big(\int_{\mathsf{E}_0\times\cdots\times\mathsf{E}_{p-1}} [\prod_{q=0}^pG_p(u_{p,\alpha})]\tilde{\eta}_0(du_{0,\alpha})[\prod_{q=1}^{p-1} M_q(u_{q-1,\alpha},du_{q,\alpha})]\bar{M}_p(u_{p-1,\alpha},du_{p,\alpha})\times
$$
$$
\mathbb{P}(s|u_{p,\alpha}) ds\Big)\Big/\Big(
 \int_{\mathsf{E}_0\times\cdots\times\mathsf{E}_{p-1}} [\prod_{q=0}^pG_p(u_{p,\alpha})]\tilde{\eta}_0(du_{0,\alpha})[\prod_{q=1}^{p-1} M_q(u_{q-1,\alpha},du_{q,\alpha})]\Big)
$$
where $ds$ is counting measure and $\mathbb{P}(s|u_{p,\alpha})$ is as \eqref{eq:s_def}.

Note that one can easily show that \eqref{eq:basic_idea} is equal to
$$
\sum_{i=1}^{k_{\alpha}'}(-1)^{|\alpha(k_{\alpha})-\alpha(2i)|}
\Bigg(\frac{
\eta_{n,\alpha}(\varphi_{\alpha(2i)}H_{2i,n,\alpha,\theta})
}{\eta_{n,\alpha}(H_{2i,n,\alpha,\theta})} -
\frac{
\eta_{n,\alpha}(\varphi_{\alpha(2i-1)}H_{2i-1,n,\alpha,\theta})
}{\eta_{n,\alpha}(H_{2i-1,n,\alpha,\theta})}\Bigg).
$$

Recall $\tau_{i,\alpha}=(-1)^{|\alpha(k_{\alpha})-\alpha(2i)|}$ and set, for each $\varphi,\alpha(i)$,
{
$$
\zeta_{i,n,\varphi}(x_{0:n}(1:k_{\alpha}),s,\theta)=\varphi_{\alpha(i)}(x_{0:n}^s(i),\theta)H_{i,n,\alpha,\theta}(x_{0:n}(1:k_{\alpha})).
$$}
Now set
\begin{eqnarray*}
\psi_{n,i,\alpha}^{N_{\alpha}} & := & \frac{\eta_{n,\alpha}^{N_{\alpha}}(\zeta_{2i-1,n,\varphi})}{\eta_{n,\alpha}^{N_{\alpha}}(H_{2i,n,\alpha,\theta})\eta_{n,\alpha}^{N_{\alpha}}(H_{2i-1,n,\alpha,\theta})} \\
\psi_{n,i,\alpha} & := & \frac{\eta_{n,\alpha}(\zeta_{2i-1,n,\varphi})}{\eta_{n,\alpha}(H_{2i,n,\alpha,\theta})\eta_{n,\alpha}(H_{2i-1,n,\alpha,\theta})} \\
\overline{\psi}_{n,i,\alpha}^{N_{\alpha}}  & := & \psi_{n,i,\alpha}^{N_{\alpha}}  - \psi_{n,i,\alpha}.
\end{eqnarray*}
In addition:
\begin{eqnarray*}
\Xi^{N_{\alpha}}_{n,\alpha,1} & := & \sum_{i=1}^{k_{\alpha}'}\tau_{i,\alpha}\Big[\eta_{n,\alpha}^{N_{\alpha}}(H_{2i,n,\alpha,\theta})^{-1}-\eta_{n,\alpha}(H_{2i,n,\alpha,\theta})^{-1}\Big][\eta_{n,\alpha}^{N_{\alpha}}-\eta_{n,\alpha}](\zeta_{2i,n,\varphi}-\zeta_{2i-1,n,\varphi}) \\
\Xi^{N_{\alpha}}_{n,\alpha,2} & := & \sum_{i=1}^{k_{\alpha}'}\tau_{i,\alpha}\overline{\psi}_{n,i,\alpha}^{N_{\alpha}}[\eta_{n,\alpha}^{N_{\alpha}}-\eta_{n,\alpha}](H_{2i,n,\alpha,\theta}-H_{2i-1,n,\alpha,\theta}) \\
\Xi^{N_{\alpha}}_{n,\alpha,3} & := & \sum_{i=1}^{k_{\alpha}'}\tau_{i,\alpha}\eta_{n,\alpha}(H_{2i,n,\alpha,\theta})^{-1}[\eta_{n,\alpha}^{N_{\alpha}}-\eta_{n,\alpha}](\zeta_{2i,n,\varphi}-\zeta_{2i-1,n,\varphi}) \\
\Xi^{N_{\alpha}}_{n,\alpha,4} & := & \sum_{i=1}^{k_{\alpha}'}\tau_{i,\alpha}\psi_{n,i,\alpha}[\eta_{n,\alpha}^{N_{\alpha}}-\eta_{n,\alpha}](H_{2i,n,\alpha,\theta}-H_{2i-1,n,\alpha,\theta}) \\
\Xi^{N_{\alpha}}_{n,\alpha,5} & := & \sum_{i=1}^{k_{\alpha}'}\tau_{i,\alpha}\Big[\eta_{n,\alpha}^{N_{\alpha}}(H_{2i,n,\alpha,\theta})^{-1}\eta_{n,\alpha}(H_{2i,n,\alpha,\theta})^{-1} - \eta_{n,\alpha}(H_{2i,n,\alpha,\theta})^{-2}\Big]\times \\ & &
\eta_{n,\alpha}(\zeta_{2i,n,\varphi}-\zeta_{2i-1,n,\varphi})[\eta_{n,\alpha}^{N_{\alpha}}-\eta_{n,\alpha}](H_{2i,n,\alpha,\theta})\\
\Xi^{N_{\alpha}}_{n,\alpha,6} & := & \sum_{i=1}^{k_{\alpha}'}\tau_{i,\alpha}\eta_{n,\alpha}(H_{2i,n,\alpha,\theta}-H_{2i-1,n,\alpha,\theta})\overline{\psi}_{n,i,\alpha}^{N_{\alpha}} \\
\Xi^{N_{\alpha}}_{n,\alpha,7} & := & \sum_{i=1}^{k_{\alpha}'}\tau_{i,\alpha}\eta_{n,\alpha}(H_{2i,n,\alpha,\theta})^{-2}\eta_{n,\alpha}(\zeta_{2i,n,\varphi}-\zeta_{2i-1,n,\varphi})[\eta_{n,\alpha}^{N_{\alpha}}-\eta_{n,\alpha}](H_{2i,n,\alpha,\theta}).
\end{eqnarray*}
{
\begin{lem}\label{lem:1}
Assume \ref{hyp:A}. Then for every $n\geq 0$, $\varphi:\mathbb{N}_0^d\times\mathsf{X}^{n+1}\times\Theta\rightarrow\mathbb{R}$ bounded and $\alpha\in\mathcal{I}$ we have that:
$$
\Delta\mathbb{E}^{N_{\alpha}}_{\pi_{n,\alpha}}[\varphi_{\alpha}(X_{0:n},\theta)]
-\Delta \mathbb{E}_{\pi_{n,\alpha}}[\varphi_{\alpha}(X_{0:n},\theta)] =
\sum_{j=1}^7(-1)^{j+1}\Xi^{N_{\alpha}}_{n,\alpha,j}.
$$
\end{lem}}
\begin{proof}
Follows by standard algebra. \ref{hyp:A} is only used to ensure the existence of all the associated quantities.
\end{proof}
{
\begin{prop}\label{prop:1}
Assume \ref{hyp:A}-\ref{hyp:B}.  Then for every $n\geq 0$, $\varphi:\mathbb{N}_0^d\times\mathsf{X}^{n+1}\times\Theta\rightarrow\mathbb{R}$ bounded and
$\alpha\in\mathcal{I}$, there exist a $C(\alpha(1:k_{\alpha}))$, with $\lim_{\min_{1\leq i\leq d}\alpha_i \rightarrow+\infty}C(\alpha(1:k_{\alpha}))=0$, such that for $j\in\{1,\dots,7\}$, $N_{\alpha}\geq 1$:
$$
\max\{|\mathbb{E}[\Xi^{N_{\alpha}}_{n,\alpha,j}]| ,\mathbb{E}[(\Xi^{N_{\alpha}}_{n,\alpha,j})^2]\} \leq \frac{C(\alpha(1:k_{\alpha}))}{N_{\alpha}}.
$$
\end{prop}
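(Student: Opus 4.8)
The plan is to combine two standard control estimates for the SMC$^2$ empirical measure $\eta_{n,\alpha}^{N_{\alpha}}$ with the difference–smallness supplied by (A\ref{hyp:B}). First I would record the two ingredients made available by (A\ref{hyp:A}). Since the potentials $\check{g}_{\theta,\alpha(1:k_{\alpha})}$, and hence the $H_{i,n,\alpha,\theta}$, are bounded above and below uniformly in $(x,\theta)$, the standard $L^p$ analysis of SMC samplers (as in \cite{delm:06b,chopin2}) yields, for every bounded measurable $f$ on $\mathsf{E}_n\times\{1,\dots,N\}$ and every $p\geq 1$, a constant $B=B(n,p,N)$ independent of $N_{\alpha}$ with
$$
\mathbb{E}\big[\big|[\eta_{n,\alpha}^{N_{\alpha}}-\eta_{n,\alpha}](f)\big|^p\big]^{1/p} \leq \frac{B\,\|f\|}{\sqrt{N_{\alpha}}}, \qquad \big|\mathbb{E}\big[[\eta_{n,\alpha}^{N_{\alpha}}-\eta_{n,\alpha}](f)\big]\big| \leq \frac{B\,\|f\|}{N_{\alpha}}.
$$
The same boundedness forces every denominator in the $\Xi^{N_{\alpha}}_{n,\alpha,j}$ (the quantities $\eta_{n,\alpha}(H_{\cdot})$ and their empirical counterparts) to stay bounded away from $0$ and $\infty$, so $x\mapsto x^{-1}$ is Lipschitz on the relevant range; consequently each factor of the form $\eta_{n,\alpha}^{N_{\alpha}}(H)^{-1}-\eta_{n,\alpha}(H)^{-1}$, and likewise $\overline{\psi}_{n,i,\alpha}^{N_{\alpha}}$, is itself a fluctuation obeying the first estimate, with bias of order $N_{\alpha}^{-1}$.

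Next I would extract the key smallness. Taking the random variables $\beta$ in (A\ref{hyp:B}) to be the indicator of a single index isolates one summand, so (A\ref{hyp:B}) in fact forces $\|\varphi_{\alpha(2i)}-\varphi_{\alpha(2i-1)}\|\leq C(\alpha(1:k_{\alpha}))$ for each $i\in\{1,\dots,k_{\alpha}'\}$. Combining this with the uniform bounds of the previous paragraph, the consecutive-level consistency of the coupling and \eqref{eq:assump1} (equivalently, positing the analogous increment bound for the potentials), the same mechanism gives $\|H_{2i,n,\alpha,\theta}-H_{2i-1,n,\alpha,\theta}\|\leq C(\alpha(1:k_{\alpha}))$, and then $\|\zeta_{2i,n,\varphi}-\zeta_{2i-1,n,\varphi}\|\leq C(\alpha(1:k_{\alpha}))$ and $|\eta_{n,\alpha}(\zeta_{2i,n,\varphi}-\zeta_{2i-1,n,\varphi})|\leq C(\alpha(1:k_{\alpha}))$, after splitting each composite increment via the identity $\varphi H-\varphi'H'=\tfrac12(\varphi+\varphi')(H-H')+\tfrac12(H+H')(\varphi-\varphi')$.

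With these in hand the seven terms split into two families. The terms $\Xi^{N_{\alpha}}_{n,\alpha,3},\Xi^{N_{\alpha}}_{n,\alpha,4},\Xi^{N_{\alpha}}_{n,\alpha,7}$ are linear in a single fluctuation with deterministic bounded coefficients; pulling those coefficients inside $[\eta_{n,\alpha}^{N_{\alpha}}-\eta_{n,\alpha}]$ writes each as $[\eta_{n,\alpha}^{N_{\alpha}}-\eta_{n,\alpha}](F)$ for a test function $F$ equal to a signed combination of the increments above, so that $\|F\|\leq C(\alpha(1:k_{\alpha}))$; the bias estimate then gives $|\mathbb{E}[\Xi^{N_{\alpha}}_{n,\alpha,j}]|\leq C(\alpha(1:k_{\alpha}))/N_{\alpha}$ and the $L^2$ estimate gives $\mathbb{E}[(\Xi^{N_{\alpha}}_{n,\alpha,j})^2]\leq C(\alpha(1:k_{\alpha}))^2/N_{\alpha}\leq C(\alpha(1:k_{\alpha}))/N_{\alpha}$ once $C(\alpha(1:k_{\alpha}))\leq 1$. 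The term $\Xi^{N_{\alpha}}_{n,\alpha,6}$ is treated the same way, with $\eta_{n,\alpha}(H_{2i}-H_{2i-1})$ the small deterministic factor and $\overline{\psi}_{n,i,\alpha}^{N_{\alpha}}$ the fluctuation. The remaining $\Xi^{N_{\alpha}}_{n,\alpha,1},\Xi^{N_{\alpha}}_{n,\alpha,2},\Xi^{N_{\alpha}}_{n,\alpha,5}$ are products of two fluctuations, at least one of which carries an increment of norm $\leq C(\alpha(1:k_{\alpha}))$; Cauchy--Schwarz (resp.\ H\"older) with the $L^2$ (resp.\ $L^4$) fluctuation bounds yields first moment $\leq C(\alpha(1:k_{\alpha}))/N_{\alpha}$ and second moment $\leq C(\alpha(1:k_{\alpha}))^2/N_{\alpha}^2\leq C(\alpha(1:k_{\alpha}))/N_{\alpha}$. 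Summing the seven contributions and relabelling $C(\alpha(1:k_{\alpha}))$ gives the claim, the vanishing property being inherited directly from (A\ref{hyp:B}).

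The step I expect to be the main obstacle is the second paragraph: transferring the smallness of (A\ref{hyp:B}), phrased only for the increments of $\varphi$, onto the composite objects $\zeta_{2i}-\zeta_{2i-1}$ and $H_{2i}-H_{2i-1}$ and onto their $\eta_{n,\alpha}$-integrals, while keeping the deterministic coefficients inside the signed sum so that no individual (non-small) difference is ever bounded on its own. Pairing this single factor of $C(\alpha(1:k_{\alpha}))$ with exactly one factor of $N_{\alpha}^{-1}$ — rather than $C(\alpha(1:k_{\alpha}))^{1/2}$ or $N_{\alpha}^{-1/2}$ — is the delicate bookkeeping on which the multi-index cost argument of Proposition \ref{pro:mimcmc} ultimately rests.
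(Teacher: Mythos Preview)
Your argument is correct and reaches the same $C(\alpha(1:k_{\alpha}))/N_{\alpha}$ bounds, but the mechanism differs from the paper's in how (A\ref{hyp:B}) is exploited. You first specialise $\beta$ to indicators to extract the \emph{termwise} estimates $\|\varphi_{\alpha(2i)}-\varphi_{\alpha(2i-1)}\|\leq C(\alpha(1:k_{\alpha}))$, then transfer these (with the extra potential-increment regularity you rightly flag) to $H_{2i}-H_{2i-1}$ and $\zeta_{2i}-\zeta_{2i-1}$, and finally treat each $\Xi^{N_{\alpha}}_{n,\alpha,j}$ term-by-term with Cauchy--Schwarz/H\"older on products of fluctuations. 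The paper instead keeps the signed sum intact: for each $j$ it packages the sum into a single test function $\kappa_{n,\alpha,j}$ and applies (A\ref{hyp:B}) with $\beta$ equal to the \emph{actual} coefficients appearing in $\Xi_j$ --- random ones for $j=1$ --- so that the quadratic right-hand side $C(\alpha(1:k_{\alpha}))\sum_i|\beta_i|^2$ is used directly. Writing $\Xi_j=[\eta_{n,\alpha}^{N_{\alpha}}-\eta_{n,\alpha}](\kappa_{n,\alpha,j}/\|\kappa_{n,\alpha,j}\|)\cdot\|\kappa_{n,\alpha,j}\|$, the normalised fluctuation is bounded trivially and the random norm $\|\kappa_{n,\alpha,j}\|$ is controlled by Minkowski plus an SMC $L^4$ moment bound on the $\beta$'s; for $j=1$ this even yields $\mathbb{E}[(\Xi^{N_{\alpha}}_{n,\alpha,1})^2]\leq C(\alpha(1:k_{\alpha}))/N_{\alpha}^2$. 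The paper's route is shorter and avoids isolating the $H$-increments, since (A\ref{hyp:B}) is simply invoked with the composite functions $\zeta_{\cdot,n,\varphi}$ or $H_{\cdot,n,\alpha,\theta}$ in the role of $\varphi$; your route is more elementary, makes the implicit regularity on $g_{\theta,\alpha}$ explicit, and in effect only uses the weaker consequence of (A\ref{hyp:B}) that each pairwise increment is small.
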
}
\begin{proof}
We give the proofs in the case $j=1$ or $j=3$. All other cases are essentially the same and omitted.
{Throughout the proof $C(\alpha(1:k_{\alpha}))$ is a constant that depends on
$n\geq 0$, $\varphi:\mathbb{N}_0^d\times\mathsf{X}^{n+1}\times\Theta\rightarrow\mathbb{R}$, with $\lim_{\min_{1\leq i\leq d}\alpha_i \rightarrow+\infty}C(\alpha(1:k_{\alpha}))=0$.
The exact value of $C(\alpha(1:k_{\alpha}))$ may change from line to line, but the latter property holds.}

Set
$$
\kappa_{n,\alpha,1}(x_{0:n}(1:k_{\alpha}),s,\theta) = \sum_{i=1}^{k_{\alpha}'}\tau_{i,\alpha}\Big[\eta_{n,\alpha}^{N_{\alpha}}(H_{2i,n,\alpha,\theta})^{-1}-\eta_{n,\alpha}(H_{2i,n,\alpha,\theta})^{-1}\Big]\times
$$
$$
(\zeta_{2i,n,\varphi}(x_{0:n}(1:k_{\alpha}),s,\theta)-\zeta_{2i-1,n,\varphi}(x_{0:n}(1:k_{\alpha}),s,\theta)).
$$
Then
$$
\mathbb{E}[(\Xi^{N_{\alpha}}_{n,\alpha,1})^2] = \mathbb{E}\Big[[\eta_{n,\alpha}^{N_{\alpha}}-\eta_{n,\alpha}]\Big(\frac{\kappa_{n,\alpha,1}}{\|\kappa_{n,\alpha,1}\|}\Big)^2\|\kappa_{n,\alpha,1}\|^2\Big].
$$
Clearly, {applying \ref{hyp:B} to the term $\|\kappa_{n,\alpha,1}\|$} and {using that
\begin{equation}\label{eq:prf_ref}
[\eta_{n,\alpha}^{N_{\alpha}}-\eta_{n,\alpha}]\Big(\frac{\kappa_{n,\alpha,1}}{\|\kappa_{n,\alpha,1}\|}\Big) \leq 2
\end{equation}
} it follows that
$$
\mathbb{E}[(\Xi^{N_{\alpha}}_{n,\alpha,1})^2] \leq  C(\alpha(1:k_{\alpha}))\mathbb{E}\Big[\Big(\sum_{i=1}^{k_{\alpha}'}\Big|\eta_{n,\alpha}^{N_{\alpha}}(H_{2i,n,\alpha,\theta})^{-1}-\eta_{n,\alpha}(H_{2i,n,\alpha,\theta})^{-1}\Big|^2\Big)^2\Big] \, .
$$
Application of the Minkowski inequality yields
$$
\mathbb{E}[(\Xi^{N_{\alpha}}_{n,\alpha,1})^2] \leq  C(\alpha(1:k_{\alpha})) \Big(\sum_{i=1}^{k_{\alpha}'}\mathbb{E}\Big[\Big(\Big[\eta_{n,\alpha}^{N_{\alpha}}(H_{2i,n,\alpha,\theta})^{-1}-\eta_{n,\alpha}(H_{2i,n,\alpha,\theta})^{-1}\Big]\Big)^4\Big]^{1/2}\Big)^2.
$$
{
Note that the summand
$$
\mathbb{E}\Big[\Big(\Big[\eta_{n,\alpha}^{N_{\alpha}}(H_{2i,n,\alpha,\theta})^{-1}-\eta_{n,\alpha}(H_{2i,n,\alpha,\theta})^{-1}\Big]\Big)^4\Big]^{1/2} =
\mathbb{E}\Big[\Big(\Big[\frac{\eta_{n,\alpha}(H_{2i,n,\alpha,\theta})-\eta_{n,\alpha}^{N_{\alpha}}(H_{2i,n,\alpha,\theta})}{\eta_{n,\alpha}^{N_{\alpha}}(H_{2i,n,\alpha,\theta})\eta_{n,\alpha}(H_{2i,n,\alpha,\theta})}\Big]\Big)^4\Big]^{1/2}
$$
then applying \ref{hyp:A}
$$
\mathbb{E}\Big[\Big(\Big[\eta_{n,\alpha}^{N_{\alpha}}(H_{2i,n,\alpha,\theta})^{-1}-\eta_{n,\alpha}(H_{2i,n,\alpha,\theta})^{-1}\Big]\Big)^4\Big]^{1/2} \leq
C\mathbb{E}\Big[\Big(\eta_{n,\alpha}(H_{2i,n,\alpha,\theta})-\eta_{n,\alpha}^{N_{\alpha}}(H_{2i,n,\alpha,\theta})\Big)^4\Big]^{1/2}
$$
where $C<+\infty$ is a constant that does not depend upon $\alpha$.
Then applying \cite[Proposition 2.9]{delm:00} to the term on the r.h.s.~of the above equation,
yields that
\begin{equation}\label{eq:prf_ref1}
\mathbb{E}\Big[\|\kappa_{n,\alpha,1}\|^2\Big] \leq \frac{C(\alpha(1:k_{\alpha}))}{N_{\alpha}^2}
\end{equation}
and hence allows one to derive the upper-bound}
$$
\mathbb{E}[(\Xi^{N_{\alpha}}_{n,\alpha,1})^2] \leq \frac{C(\alpha(1:k_{\alpha}))}{N_{\alpha}^2}.
$$
For the bias, we have
$$
|\mathbb{E}[(\Xi^{N_{\alpha}}_{n,\alpha,1})]| \leq  \mathbb{E}\Big[\Big|[\eta_{n,\alpha}^{N_{\alpha}}-\eta_{n,\alpha}]\Big(\frac{\kappa_{n,\alpha,1}}{\|\kappa_{n,\alpha,1}\|}\Big)\Big|\|\kappa_{n,\alpha,1}\|\Big]
$$
it follows by {using \eqref{eq:prf_ref}}
$$
|\mathbb{E}[(\Xi^{N_{\alpha}}_{n,\alpha,1})]| \leq 2\mathbb{E}[\|\kappa_{n,\alpha,1}\|]
$$
{then using Jensen's inequality and \eqref{eq:prf_ref1},} we can conclude that
$$
|\mathbb{E}[(\Xi^{N_{\alpha}}_{n,\alpha,1})]| \leq \frac{C(\alpha(1:k_{\alpha}))}{N_{\alpha}}.
$$

Set
$$
\kappa_{n,\alpha,3}(x_{0:n}(1:k_{\alpha}),s,\theta) = \sum_{i=1}^{k_{\alpha}'}\tau_{i,\alpha}\eta_{n,\alpha}(H_{2i,n,\alpha,\theta})^{-1}(\zeta_{2i,n,\varphi}(x_{0:n}(1:k_{\alpha}),s,\theta)-
$$
$$
\zeta_{2i-1,n,\varphi}(x_{0:n}(1:k_{\alpha}),s,\theta)).
$$
Then
$$
\mathbb{E}[(\Xi^{N_{\alpha}}_{n,\alpha,3})^2] = \mathbb{E}[[\eta_{n,\alpha}^{N_{\alpha}}-\eta_{n,\alpha}](\kappa_{n,\alpha,3})^2].
$$
{Applying \cite[Proposition 2.9]{delm:00} to the term on the r.h.s.}~yields
$$
\mathbb{E}[(\Xi^{N_{\alpha}}_{n,\alpha,3})^2] \leq \frac{\|\kappa_{n,\alpha,3}\|^2}{N_{\alpha}}.
$$
Application of \ref{hyp:B} gives
$$
\mathbb{E}[(\Xi^{N_{\alpha}}_{n,\alpha,3})^2] \leq \frac{C(\alpha(1:k_{\alpha}))}{N_{\alpha}}.
$$
For $|\mathbb{E}[(\Xi^{N_{\alpha}}_{n,\alpha,3})]|$ using a similar decomposition to \cite[eq.~(A.2)]{beskos} one can show that
$$
|\mathbb{E}[(\Xi^{N_{\alpha}}_{n,\alpha,3})]| \leq \frac{C(\alpha(1:k_{\alpha}))}{N_{\alpha}}.
$$
the proof is omitted as it is standard.
\end{proof}

 \bibliography{references}
\bibliographystyle{IJ4UQ_Bibliography_Style}

\end{document}